\def\newaliasedtheorem#1[#2]#3{
	\newaliascnt{#1@alt}{#2}
	\newtheorem{#1}[#1@alt]{#3}
	\expandafter\newcommand\csname #1@altname\endcsname{#3}
}
\theoremstyle{plain}
\newtheorem{theorem}{Theorem}[section]
\theoremstyle{definition}
\theoremstyle{remark}
\newcommand{\R}{\mathbb{R}}
\newcommand{\N}{\mathbb{N}}
\newcommand{\HH}{\mathbb{H}}
\newcommand{\eps}{\varepsilon}
\let\altphi\phi
\let\phi\varphi
\let\varphi\altphi
\let\altphi\undefined
\newcommand{\average}{{\mathchoice {\kern1ex\vcenter{\hrule height.4pt
width 6pt
depth0pt} \kern-9.7pt} {\kern1ex\vcenter{\hrule height.4pt width 4.3pt
depth0pt}
\kern-7pt} {} {} }}
\address{\textsc{Daniela Di Donato}: 
Dipartimento di Ingegneria Industriale e Scienze Matematiche, Via Brecce Bianche, 12 60131 Ancona, Universit\'a Politecnica delle Marche.}
\email{daniela.didonato@unitn.it}
\address{\textsc{Enrico Le Donne}: 
	Dipartimento di Matematica, Università di Pisa, Largo B. Pontecorvo 5, 56127 Pisa, Italy \\ University of Jyväskylä, Department of Mathematics and Statistics, P.O. Box (MaD), FI-40014, Finland \\ Department of Mathematics, University of Fribourg, Chemin du Musée 23, 1700 Fribourg, Switzerland}
\email{enrico.ledonne@unifr.ch}
\title{Intrinsically Lipschitz sections and applications to metric groups}
\date{\today}
\author{ Daniela Di Donato and Enrico Le Donne}
\begin{document}

\begin{abstract}
		We introduce a notion of  intrinsically Lipschitz graphs in the context of metric spaces.
		This is a broad generalization of what in  Carnot groups has been considered by
		  Franchi, Serapioni, and Serra Cassano, and later by many others. 
		  We proceed by focusing our attention on the graphs as subsets of a metric space given by the image of a section of a quotient map and we require an intrinsically Lipschitz condition. We shall not have any function on a topological product, not we shall consider a metric on the base of the quotient map. 
		 Our  results are: an Ascoli-Arzel\`a compactness theorem, an Ahlfors regularity theorem, and some extension theorems for partially defined intrinsically Lipschitz sections.
		 Known results by Franchi, Serapioni, and Serra Cassano, and by Vittone will be our corollaries.
		
	\end{abstract}

\maketitle 
\tableofcontents
\newpage

\section{Introduction}
Nowadays, in the setting of subRiemannian Carnot groups, there is a rich theory of geometric analysis: from geometric measure theory to partial differential equations \cite{BLU, CDPT, Corni19, CorniMagnani, Mag13, Pansu, NY, Rigot, SC16}.  
Objects that play the role of Lipschitz submanifolds are the intrinsically Lipschitz graphs, originally introduced by Franchi, Serapioni, and Serra Cassano, in order to have an adapted notion of rectifiability of (boundaries of) finite-perimeter sets \cite{ASCV, AmbrosioKirchheimRect, AKLD, ADDDLD, biblioAM2, ArenaSerapioni, BCSC, BSC, BSCgraphs, MR3992573, MR2263950, DD19, DD20, FMS14, FSSC06, FSSC07, FSSC11, JNGV, MV, MR3984100}.

The purpose of the present article is to axiomatize the notion of intrinsically Lipschitz graphs to the setting of metric spaces.
We shall prove that some of the geometric results from the last decade are valid in a broad generality. Our setting is the following. We have a metric space $X$, a topological space $Y$, and a 
quotient map $\pi:X\to Y$, meaning
continuous, open, and surjective.
The standard example for us is when $X$ is a metric group $G$ (meaning a topological group $G$   equipped with a left-invariant distance that induces the  topology), for example a subRiemannian Carnot group (see  \cite{ledonne_primer}), 
and $Y$ is the space of left cosets $G/H$, where 
$H<G$ is a  closed subgroup and $\pi:G\to G/H$ is the projection modulo $H$, $g\mapsto gH$. The inexperienced reader may find a more basic example in Example~\ref{baby_example}.

The objects of study in this paper are the following type of maps, which we shall prove generalize the ones in \cite{FSSC, FSSC03, MR2032504}, see also  \cite{SC16, FS16}.

\begin{defi}\label{def_ILS} Given a quotient map   $\pi:X\to Y$  
between a  metric space $X$ and  a topological space $Y$, we say that a map $\phi:Y\to X$ is an {\em intrinsically Lipschitz section of $\pi$ with constant $L$},  with $L\in[1,\infty)$, if
\begin{equation}
\pi \circ \phi =\mbox{id}_Y,
\end{equation}
and
\begin{equation}\label{eq:def:ILS}
d(\phi (y_1), \phi (y_2)) \leq L d(\phi (y_1), \pi ^{-1} (y_2)), \quad \mbox{for all } y_1, y_2 \in Y.
\end{equation}
Here $d$ denotes both the distance between points on $X$, and, as usual, for a subset $A\subset X$ and a point $x\in X$, we have
$d(x,A):=\inf\{d(x,a):a\in A\}$.
\end{defi}

We shall also briefly say that a map as in Definition~\ref{def_ILS} is an {\em intrinsically $L$-Lipschitz  section} or that it is an {\em intrinsically  Lipschitz  section} if there is no need to specify $\pi$, nor $L$. In Section~\ref{sec:equiv_def}, we will give other characterizations of intrinsically  Lipschitz  sections, also in terms of the fact that the images have trivial intersection with some particular subsets of $X$, which unlike in the Carnot setting don't have anymore a structure of cones, because there is no dilation/homogeneous structure assumed on $X$. 

We shall call the image $\phi(Y)$ of some intrinsically Lipschitz section $\phi:Y\to X$ an {\em intrinsically Lipschitz graph}.
We stress that, even if the set $\phi(Y)$ is parametrized  by $Y$, the geometric regularity of $\phi(Y)$ depends only on the ambient distance in $X$, and not on the one of $Y$, which a priori  we haven't metrized. 
 In particular, the set $\phi(Y)$ may not be biLipschitz   equivalent to $Y$.
 In some situations we might have a natural way of metrizing $Y$, but it might not be the case that   $\pi$ becomes a Lipschitz quotient,  e.g. a submetry (see Section~\ref{sec:prel} for these definitions). In the case $
 \pi$ is a Lipschitz quotient, the results trivialize, since in this case being intrinsically Lipschitz  is equivalent to being a biLipschitz embedding, see Proposition~\ref{tivial_Lip_quo}. In the context of groups, one has a Lipschitz quotient    when one takes a normal subgroup $N\lhd G$ of   a metric Lie group  $G$  and $\pi:G\to G/N$.
 
 In case $G$ is a Carnot group that can be written as product of two complementary homogeneous subgroups $G=H_1 \cdot H_2$ then our notion of intrinsically Lipschitz graph coincides with the one given by Franchi-Serapioni-SerraCassano, see Section~\ref{sec:various_notions}. However, we stress that in the case of an arbitrary decomposition $G=H_1 \cdot H_2$, with $H_1$ and $H_2$   not necessarily homogeneous,  
 one could naturally identify $G/H_2$ with $H_1$ but, first, if $H_2$ is not normal, then the cosets of $H_2$ are not parallel within $G$, and even if $H_2$ is   normal, then the projection onto $H_1$ may not be a Lipschitz quotient. It is crucial to remark that in the case of Franchi, Serapioni, and Serra Cassano the homogeneity assumption will imply such a property, see Section~\ref{sec:various_notions}. The aim of our work is to provide generalizations to the metric setting of results known in the case of homogeneous decomposition of groups.
 
 \medskip
 
  The first series of our results is about the equicontinuity of intrinsically Lipschitz sections with uniform constant and consequently a compactness property \`a la Ascoli-Arzel\'a. Be aware that, in the literature on metric spaces, another term for    boundedly compact is {\em proper}.

 \begin{theorem}[Equicontinuity and Compactness Theorem]
 \label{thm1}
 Let $\pi:X\to Y$ be a quotient map 
between a  metric space $X$ and  a topological space $Y$.   

\noindent{\rm \bf(\ref{thm1}.i)} Every intrinsically Lipschitz section of $\pi$   is continuous.

    Next, assume in addition  that closed balls in  $X$ are compact (we say that $X$ is {\em boundedly compact}).

\noindent{\rm \bf(\ref{thm1}.ii)} For all
    $K' \subset Y$ compact,
  $L\geq 1 $,     $K \subset X$ compact, and  $y_0\in Y$
   the set
 \begin{equation*}\label{set2}
\{ \phi _{|_{K'}} : K' \to X \,| \, \phi :Y \to X \mbox{intrinsically $L$-Lipschitz  section of $\pi$}, \phi (y_0) \in K \}
\end{equation*}
is 
 equibounded, equicontinuous, and closed in the uniform-convergence topology.
 
 \noindent{\rm \bf(\ref{thm1}.iii)} For all $L\geq 1$,     $K \subset X$ compact, and  $y_0\in Y$
the set
 \begin{equation*}\label{set1}
\{ \phi : Y \to X \,| \, \phi \text{ intrinsically $L$-Lipschitz  section of $\pi$},
 \phi (y_0) \in K \}
\end{equation*}
  is compact with respect to  the topology of uniform convergence on compact sets.
  \end{theorem}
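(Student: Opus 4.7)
The plan is to prove (i) by a direct use of the openness of $\pi$, to obtain both equiboundedness and equicontinuity in (ii) by finite-cover arguments in $X$ whose $\pi$-images cover $K'$, and then to deduce (iii) from (ii) via a Tychonoff argument combined with the standard upgrade from pointwise convergence to uniform convergence on compacta.

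For (i), fix $y^\ast\in Y$ and $\eps>0$. Since $\pi$ is open, $\pi(B(\phi(y^\ast),\eps))$ is an open neighbourhood of $y^\ast$; for any $y$ in it the fibre $\pi^{-1}(y)$ meets $B(\phi(y^\ast),\eps)$, so $d(\phi(y^\ast),\pi^{-1}(y))<\eps$ and \eqref{eq:def:ILS} gives $d(\phi(y^\ast),\phi(y))<L\eps$.

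For the equiboundedness in (ii), I would cover $X$ by open balls of radius one, push forward under the open map $\pi$, and extract a finite subcover $\pi(B(x_1,1)),\dots,\pi(B(x_N,1))$ of the compact $K'$. For every $y\in K'$ one has $d(x_i,\pi^{-1}(y))<1$ for some index $i$, so the Lipschitz inequality at $y_0$ gives
\[
d(\phi(y_0),\phi(y))\le L\,d(\phi(y_0),\pi^{-1}(y))\le L\bigl(d(\phi(y_0),x_i)+1\bigr),
\]
uniformly bounded since $\phi(y_0)\in K$ compact and $i$ ranges over finitely many indices. Bounded compactness then places all $\phi(K')$ inside a common compact set $K''\subset X$. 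For equicontinuity at $y^\ast\in K'$, set $C:=K''\cap\pi^{-1}(y^\ast)$ (compact), cover it by finitely many balls $B(x_i,\eps/(2L))$ with $x_i\in C$, and let $U:=\bigcap_i\pi(B(x_i,\eps/(2L)))$, an open neighbourhood of $y^\ast$ independent of $\phi$. For $y'\in U$ and $\phi$ in the family, $\phi(y^\ast)\in C$ lies in some $B(x_i,\eps/(2L))$, which meets $\pi^{-1}(y')$; the triangle inequality yields $d(\phi(y^\ast),\pi^{-1}(y'))<\eps/L$, and \eqref{eq:def:ILS} gives $d(\phi(y^\ast),\phi(y'))<\eps$.

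For (iii), applying the equiboundedness of (ii) to $K'=\{y_0,y\}$ for each $y\in Y$ yields a compact $K_y\subset X$ containing the value at $y$ of every member of the family, so the family embeds into the Tychonoff-compact product $\prod_{y\in Y}K_y$. Closedness in the pointwise topology follows from: continuity of $\pi$ (the identity $\pi\circ\phi=\mathrm{id}_Y$ passes to pointwise limits), closedness of $K$ (the constraint $\phi(y_0)\in K$ survives), and continuity of $d$ together with the $1$-Lipschitz function $x\mapsto d(x,\pi^{-1}(y))$ (the inequality \eqref{eq:def:ILS} survives). A standard three-$\eps$ argument then uses the equicontinuity of (ii) to upgrade pointwise convergence on the family to uniform convergence on compacta, yielding (iii). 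Closedness in (ii) is a corollary: a uniform limit on $K'$ of restrictions is the restriction of the uniform-on-compacta limit of a subnet of the underlying sections, which lies in the family by (iii). The main obstacle is the equicontinuity step: with $Y$ only topological and the anchor $\phi(y^\ast)$ varying with $\phi$, one must produce a single neighbourhood $U$ of $y^\ast$ uniform over the whole family, and this is what the finite cover of the compact fibre trace $K''\cap\pi^{-1}(y^\ast)$ achieves.
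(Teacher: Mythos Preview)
Your proof is correct and follows essentially the same approach as the paper: continuity via openness of $\pi$, equiboundedness by a compactness-in-$X$ argument, equicontinuity via a finite cover of the compact fibre trace $K''\cap\pi^{-1}(y^\ast)$ (this is exactly the ``uniform openness on compacta'' of the paper's Remark~\ref{remnewok}), and the passage to (iii) by Ascoli--Arzel\`a. The only cosmetic differences are that you obtain equiboundedness uniformly over $K'$ via a finite cover of $K'$ by $\pi$-images of balls (the paper argues pointwise in $y$), and you deduce closedness in (ii) as a corollary of (iii) rather than by taking limits directly in \eqref{eq:def:ILS}.
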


Without the assumption that $\pi$ is an open map,    intrinsically Lipschitz sections may not be continuous. See Example~\ref{rempiopen} for some pathological  intrinsically Lipschitz sections.

We stress, as similarly done before, that given two intrinsically Lipschitz sections 
$\phi_1, \phi_2:Y\to X$,  the two sets $\phi_1(Y)$ and $\phi_2(Y)$ may not be biLipschitz equivalent.
However, following the influential paper \cite{FS16}, we prove that, in the presence of a nice measure on $Y$, then if $\phi_1(Y)$ is an Ahlfors regular set then so is $\phi_2(Y)$.
The assumption of the existence of such a measure is necessary, see Example~\ref{exaAhlfors1711}.
 More  precisely, our next result is the following.

   \begin{theorem}[Ahlfors regularity, after Franchi-Serapioni-SerraCassano]\label{thm2}
   Let $\pi :X \to Y$ be a quotient map between a metric space $X$ and a topological space $Y$ such that there is a measure $\mu$ on $Y$ such that for every $r_0>0$ and every $x,x' \in X$ with $\pi (x)=\pi(x')$  there is $C>0$ such that 
   \begin{equation}\label{Ahlfors27ott.112}
\mu (\pi (B(x,r))) \leq C \mu (\pi (B(x',r))),\quad \forall  r\in (0,r_0).
\end{equation}
We also assume that there is  an intrinsically  Lipschitz section $\phi :Y \to X$  of $\pi$ such that $\phi (Y)$ is locally $Q$-Ahlfors regular with respect to  the measure $\phi_* \mu$, with $Q\in (0,\infty)$. 

  Then, for every intrinsically  Lipschitz section $\psi :Y \to X$ of $\pi$,   the set $ \psi (Y) $ is locally $Q$-Ahlfors regular with respect to  the measure $\psi_* \mu.$

    \end{theorem}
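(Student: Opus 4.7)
The plan is to use the quantity $\mu(\pi(B(\cdot, r)))$ as a bridge between the Ahlfors regularity of $\phi(Y)$ and that of $\psi(Y)$. The starting observation, which I would establish first, is a sandwich inclusion valid for any intrinsically $L$-Lipschitz section $\sigma:Y\to X$ and any $y_0\in Y$ with $x:=\sigma(y_0)$:
\[
\sigma^{-1}(B(x,r)) \,\subseteq\, \pi(B(x,r)) \,\subseteq\, \sigma^{-1}(B(x,Lr)),\qquad r>0.
\]
The left inclusion is immediate from $\pi\circ\sigma=\mathrm{id}_Y$; the right one uses (\ref{eq:def:ILS}): if $y\in\pi(B(x,r))$ then $d(x,\pi^{-1}(y))<r$, and hence $d(\sigma(y_0),\sigma(y))\leq L\,d(\sigma(y_0),\pi^{-1}(y))<Lr$. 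Since a section is automatically injective (from $\pi\circ\sigma=\mathrm{id}_Y$), one also has $\sigma_*\mu(B(x,r)\cap\sigma(Y))=\mu(\sigma^{-1}(B(x,r)))$, so the measure of the ball in $\sigma(Y)$ is literally $\mu$ of the left-hand side of the sandwich.

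Next, I fix $y_0\in Y$ and set $x:=\psi(y_0)$, $x':=\phi(y_0)$, so that $\pi(x)=\pi(x')$. Applying the sandwich to $\sigma=\phi$ with constant $L_\phi$ and invoking the assumed local $Q$-Ahlfors regularity of $\phi(Y)$, I would obtain, at sufficiently small scales $r$, the two-sided estimate
\[
\mu(\pi(B(x',r))) \asymp r^Q,
\]
the upper bound coming from the right inclusion and the lower from the left. Hypothesis (\ref{Ahlfors27ott.112}), applied both as stated and with the roles of $x$ and $x'$ swapped to produce the reverse inequality, then transports this estimate to $x$, yielding $\mu(\pi(B(x,r)))\asymp r^Q$ on some interval $(0,r_0)$ depending on $y_0$.

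To conclude, I apply the sandwich a second time with $\sigma=\psi$ of constant $L_\psi$. The upper bound $\mu(\psi^{-1}(B(x,r)))\leq \mu(\pi(B(x,r)))\lesssim r^Q$ is immediate from the left inclusion, while for the lower bound I substitute $r/L_\psi$ for $r$ in the right inclusion to get $\mu(\psi^{-1}(B(x,r)))\geq \mu(\pi(B(x,r/L_\psi)))\gtrsim r^Q$. Rewriting both sides as $\psi_*\mu(B(\psi(y_0),r)\cap\psi(Y))$ yields the required local $Q$-Ahlfors regularity of $\psi(Y)$ at $\psi(y_0)$ with respect to $\psi_*\mu$. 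Every step is elementary; the only bookkeeping concern is to choose $r_0$ small enough that the local Ahlfors regularity of $\phi(Y)$ at $x'$, the two instances of (\ref{Ahlfors27ott.112}) at the pair $(x,x')$, and the factors $1/L_\phi$ and $1/L_\psi$ are simultaneously in effect, which just amounts to taking the minimum of finitely many positive scales.
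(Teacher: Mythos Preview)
Your proof is correct and follows essentially the same approach as the paper. Your sandwich inclusion $\sigma^{-1}(B(x,r))\subseteq\pi(B(x,r))\subseteq\sigma^{-1}(B(x,Lr))$ is precisely the content of the paper's key inclusion \eqref{inclusionepalle} (after the identification $\sigma^{-1}(B(x,r))=\pi(B(x,r)\cap\sigma(Y))$ for a section), and the subsequent chaining through hypothesis \eqref{Ahlfors27ott.112} in both directions matches the paper's argument line by line.
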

 Namely, in Theorem~\ref{thm2} the local Ahlfors $Q$-regularity of $\phi (Y)$ means that    the measure $\phi_* \mu$ is such that  for each point $x\in \phi (Y)$ there exist $r_0>0$ and $C>0$ so that
  \begin{equation}\label{Ahlfors_IN_N}
 C^{-1}r^Q\leq  \phi_* \mu \big( B(x,r) \cap \phi (Y)\big) \leq C r^Q, \qquad \text{ for all }r\in (0,r_0).
\end{equation}
The same inequality will hold for  $ \psi (Y) $ and $\psi_* \mu $ with a possibly different value of $C$. See Section~\ref{theoremAhlforsNEW}.
    
    \medskip
    
In some settings one has the necessity of describing ``regular'' submanifolds as zero sets of distinguished functions, which is useful to possibly extend partially defined objects. Generalizing a result of Vittone \cite{Vittone20}, we next show that this can also be done with intrinsically Lipschitz graphs, at least when we have a good control on fibers.
In next result, we say that a metric-space-valued  map $f$ on $X$ is $L$-{\em biLipschitz on  fibers} (of $\pi$)  if 
 on each fiber of $\pi$ it restricts to an $L$-biLipschitz homeomorphism.

\begin{theorem}[Extensions as level sets, after Vittone]\label{thm3}  
 Let $\pi:X\to Y$ be a quotient map 
between a  metric space $X$ and  a topological space $Y$. 

\noindent{\rm \bf(\ref{thm3}.i)}
If $Z$ is a metric space, $z_0\in Z$ and $f:X\to Z$ is  $L$-Lipschitz and $L$-biLipschitz on  fibers, with $L\geq 1$, then
  there exists an intrinsically $(1+L^2)$-Lipschitz section  $\phi : Y\to X $ of $\pi$ such that
\begin{equation}\label{equationluogoz0_intro}
\phi (Y)=f^{-1} (z_0).
\end{equation}

\noindent{\rm \bf(\ref{thm3}.ii)}
Vice versa, assume that $X$ is geodesic and that there exist $k, L\geq 1,$ $\rho : X \times X \to \R$ k-biLipschitz equivalent to the distance of $X,$ and $\tau :X \to \R $ k-Lipschitz and $k$-biLipschitz on  fibers such that  \begin{enumerate}
\item  for all $\tau_0\in \R $ the set $\tau ^{-1}(\tau_0)$ is an intrinsically $k$-Lipschitz graph of a section $\phi_{\tau_0}:Y\to X;$
\item for all $x_0\in \tau ^{-1} (\tau_0)$ the map $\delta _{\tau_0} :X\to \R, x \mapsto \delta _{\tau_0} (x) := \rho (x_0, \phi_{\tau_0}(\pi (x)))$ is $k$-Lipschitz on the set $\{|\tau-\tau_0| \leq k L \delta _{\tau_0} \}$.
\end{enumerate}
Let $Y'\subset Y$ be a set.
Then for every  intrinsically $L$-Lipschitz section  $\phi : Y'\to \pi^{-1}(Y') $  of $\pi|_{\pi^{-1}(Y')} :\pi^{-1}(Y') \to  Y' $, there exists a  map $ f:X\to \R$    that is $K$-Lipschitz and $K$-biLipschitz on  fibers, with $K:= 2k(Lk+2)$, such that     \begin{equation}\label{equationluogozeri_intro}
\phi (Y')\subseteq f^{-1} (0).
\end{equation}
In particular, each `partially defined' intrinsically Lipschitz graph $\phi (Y')$ is a subset of a `globally defined' intrinsically Lipschitz graph $ f^{-1} (0)$.
 \end{theorem}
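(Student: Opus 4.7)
The plan for part~(\ref{thm3}.i) is to define $\phi$ fiberwise from $f$ and read the intrinsic Lipschitz estimate straight off the two hypotheses on $f$. For each $y\in Y$, the $L$-biLipschitzness of $f$ on the fiber $\pi^{-1}(y)$ forces $\pi^{-1}(y)\cap f^{-1}(z_0)$ to contain at most one point; existence on every fiber is implicit in the statement, and one then takes $\phi(y)$ to be that unique point. For $y_1,y_2\in Y$ and any $x'\in\pi^{-1}(y_2)$, the same $L$-biLipschitzness on $\pi^{-1}(y_2)$ gives
\[
d(\phi(y_2),x')\;\leq\;L\,d_Z\bigl(f(\phi(y_2)),f(x')\bigr)\;=\;L\,d_Z(z_0,f(x')),
\]
while the global $L$-Lipschitzness of $f$ combined with $f(\phi(y_1))=z_0$ gives $d_Z(z_0,f(x'))\leq L\,d(\phi(y_1),x')$. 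The triangle inequality then yields $d(\phi(y_1),\phi(y_2))\leq(1+L^2)\,d(\phi(y_1),x')$, and taking the infimum over $x'\in\pi^{-1}(y_2)$ produces \eqref{eq:def:ILS} with constant $1+L^2$.

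For part~(\ref{thm3}.ii) the plan is to look for $f$ in the form $f(x):=\tau(x)-s(\pi(x))$, where $s\colon Y\to\R$ is a suitable extension of the map $t(y'):=\tau(\phi(y'))$, originally defined only on $Y'$. Since $s\circ\pi$ is constant on fibers, one has $|f(x_1)-f(x_2)|=|\tau(x_1)-\tau(x_2)|$ whenever $\pi(x_1)=\pi(x_2)$, so the $k$-biLipschitzness of $f$ on fibers is free from that of $\tau$; the content lies in constructing $s$ so that simultaneously $s|_{Y'}=t$ (which forces $\phi(Y')\subseteq f^{-1}(0)$) and $f$ is globally $K$-Lipschitz with $K=2k(Lk+2)$. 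The natural candidate is an inf-convolution adapted to the foliation $\{\tau^{-1}(\tau_0)\}_{\tau_0\in\R}$: observing that $\phi_{t(y')}(y')=\phi(y')$ since both points sit in the singleton $\pi^{-1}(y')\cap\tau^{-1}(t(y'))$, one uses $\rho\bigl(\phi(y'),\phi_{t(y')}(y)\bigr)$ as an effective distance from $y$ to $y'$ measured along the common level surface, leading to
\[
s(y)\;:=\;\inf_{y'\in Y'}\Bigl[\,t(y')\,+\,c\,\rho\bigl(\phi(y'),\phi_{t(y')}(y)\bigr)\Bigr]
\]
for a constant $c=c(k,L)$ to be calibrated (of order $k^2 L$). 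The trial $y'=y$ immediately gives $s(y)\leq t(y)$ for $y\in Y'$; the reverse inequality follows, once $c$ is large enough, by combining the intrinsic $L$-Lipschitzness of $\phi$ on $Y'$, the $k$-biLipschitzness of $\tau$ on fibers, and the $k$-biLipschitz equivalence between $\rho$ and $d$.

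The main obstacle is verifying the global $K$-Lipschitz property of $f$. Since $|\tau(x_1)-\tau(x_2)|\leq k\,d(x_1,x_2)$ is immediate, the content is an estimate of the form $|s(\pi(x_1))-s(\pi(x_2))|\lesssim d(x_1,x_2)$. For an almost-optimal competitor $y'$ in the infimum defining $s(\pi(x_1))$, this reduces to comparing $\phi_{t(y')}(\pi(x_1))$ with $\phi_{t(y')}(\pi(x_2))$, which is precisely what hypothesis~(2) is designed to control: it makes the map $x\mapsto\rho\bigl(\phi(y'),\phi_{t(y')}(\pi(x))\bigr)$ $k$-Lipschitz on the slab $\{|\tau-t(y')|\leq kL\delta_{t(y')}\}$. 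Because $X$ is geodesic, any two nearby points $x_1,x_2$ can be joined by a short curve that, possibly after being broken into a fiber-step followed by a level-set-step, stays inside the relevant slab, while the $k$-biLipschitz equivalence of $\rho$ with $d$ converts between the two metrics. The asserted constant $K=2k(Lk+2)$ emerges by tracking the factors of $k$ through this balance; the delicate quantitative point is to choose $c$ so that $s|_{Y'}=t$ and the $K$-Lipschitz estimate hold at the same time.
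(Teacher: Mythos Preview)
Your argument for part~(\ref{thm3}.i) is correct and is essentially the paper's proof.

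For part~(\ref{thm3}.ii) there is a genuine gap. Writing $f=\tau - s\circ\pi$ does give the $k$-biLipschitz property on fibers for free, and with $c\geq k^2L$ one does get $s|_{Y'}=t$ as you indicate. The problem is the global Lipschitz property of $f$, equivalently of $s\circ\pi$. Your $f$ equals $\sup_{y'\in Y'} g_{y'}$ with
\[
g_{y'}(x)=\tau(x)-t(y')-c\,\delta_{t(y')}(x),
\]
and hypothesis~(2) only makes $\delta_{t(y')}$ $k$-Lipschitz on the slab $\{|\tau-t(y')|\le kL\,\delta_{t(y')}\}$, not on all of $X$. Your ``fiber-step followed by level-set-step'' idea does not close this: given $x_1,x_2$ and a near-optimal $y'$ for $s(\pi(x_1))$, moving $x_1$ along its fiber to the level set $\tau^{-1}(t(y'))$ costs at most $k\,|\tau(x_1)-t(y')|$, but nothing in the construction bounds $|\tau(x_1)-t(y')|$ in terms of $d(x_1,x_2)$. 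Equivalently, there is no mechanism forcing the near-optimal $y'$ to place $x_1$ (or $x_2$) inside the relevant slab.

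The paper's remedy is precisely to engineer each building block so that it is globally Lipschitz. Instead of the single formula $g_{y'}$, one defines, for each $x_0\in\phi(Y')$ (with $\tau_0:=\tau(x_0)$ and $\alpha:=kL$), a piecewise function
\[
f_{x_0}(x)=
\begin{cases}
2(\tau(x)-\tau_0)-\alpha\,\delta_{\tau_0}(x) & \text{if } |\tau(x)-\tau_0|\le kL\,\delta_{\tau_0}(x),\\
\tau(x)-\tau_0 & \text{if } \tau(x)-\tau_0> kL\,\delta_{\tau_0}(x),\\
3(\tau(x)-\tau_0) & \text{if } \tau(x)-\tau_0< -kL\,\delta_{\tau_0}(x),
\end{cases}
\]
so that $\delta_{\tau_0}$ is used only inside the slab where it is Lipschitz, while outside one uses pure multiples of $\tau-\tau_0$; the constants $1,2,3$ and $\alpha$ are chosen so that the pieces agree on the slab boundary. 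Since $X$ is geodesic, each $f_{x_0}$ is then globally $K$-Lipschitz, and $f:=\sup_{x_0\in\phi(Y')}f_{x_0}$ inherits both this and the biLipschitz-on-fibers property (with the same orientation as $\tau$). The verification that $f_{x_0}(\bar x_0)\le 0$ for $\bar x_0\in\phi(Y')$ is exactly your inequality $|t(y)-t(y')|\le k^2L\,\rho(\phi(y'),\phi_{t(y')}(y))$. In short, the missing idea is this piecewise ``switching off'' of $\delta_{\tau_0}$ outside its slab of Lipschitzness; without it the inf-convolution does not yield a globally Lipschitz $f$ under the stated hypotheses.
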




We shall next apply our study to the case of metric groups and more specifically to the case of Carnot groups. We shall then see how our theorems give known results as immediate consequences.
Initially, we shall barely consider the case  of a metric   group $G$ and a closed subgroup $H$ of $G$. In such a way, we shall rephrase the notion of intrinsically Lipschitz section of  the quotient map $\pi : G\to G/H$. 
To have further geometric properties of intrinsically Lipschitz sections in groups,  we shall require a {\em splitting} $G=H_1\cdot H_2$ with $H_1, H_2$ closed subgroups.
As commonly done, writing $G=H_1\cdot H_2$ means that 
$G=\{h_1h_2: h_1\in H_1, h_2\in H_2\}$ and 
 $ H_1 \cap H_2 = \{1_G\}$.
For example, if in addition $H_1$ is a normal subgroup, then $G$
has  the structure of semidirect product $G=H_1 \rtimes H_2$. 
In the presence of a splitting $G=H_1\cdot H_2$, we have the two naturally defined projection maps $\pi_{H_i}:G\to H_i$. We stress that such maps may not be Lipschitz, not even when one of the groups is normal. An important setting, in which we have several equivalences, is when the map $\pi_{H_1}$ is {\em Lipschitz at $1_G$}, i.e.,
\begin{equation}\label{def:splitat1}
d(1_G, \pi_{H_1}(g) )\leq K d(1_G,g), \qquad \forall g\in G,
\end{equation}
where, with $1_G$ we denote the identity element of the group $G$.
See Section~\ref{sec:Lip_at_1} for other equivalent conditions for the Lipschitz property at $1_G$, especially in the case when $H_1$ is normal. For example, for us an important equivalent property is that the inclusion $H_1 \hookrightarrow G$ is an intrinsically Lipschitz section for the projection $G\to G/H_2$.

When we have a splitting $G=H_1\cdot H_2$ then the left-coset of $H_1$ are sections of the projection modulo $H_2$. In general, such sections may not be intrinsically Lipschitz (as we just said, they are if and only if $\pi_{H_1}$ is Lipschitz at $1_G$). We introduce a notion of being intrinsically Lipschitz with respect to these sections, see Definition~\ref{defwrtpsinew}. The two notions of intrinsically Lipschitz sections coincide under the assumption that the left cosets of $H_1$ are intrinsically Lipschitz (see Corollary~\ref{corol8.8}). 
These types of facts hold  in the general setting of metric spaces, as in the next result. 
 \begin{prop}\label{linkintrinsicocneelip}
   Let $X  $ be a metric space, $Y$ a topological space, $\pi :X \to Y$   a  quotient map, and $L\geq 1$.
Assume that   every point $x\in X$ is contained in the image of an intrinsically $L$-Lipschitz section $\psi_x$ for $\pi$.
 Then for every section $\phi :Y\to X$ of $\pi$ the following are equivalent:
   \begin{enumerate}
\item for some $L_1\geq1$ and for all $x\in\phi(Y)$ the section $\phi $ is intrinsically $L_1$-Lipschitz with respect to  $\psi_x$ at   $x$ (see  Definition~\ref{defwrtpsinew});
\item  the section $\phi $  is intrinsically $L_2$-Lipschitz.
\end{enumerate}
   \end{prop}
 
Next we make the link with the notion of intrinsically Lipschitz maps in the sense of Franchi, Serapioni, and Serra Cassano. Given a splitting $G=H_1\cdot H_2$,
for $\psi:H_1\to H_2$ we set
  $$\Gamma _ {\psi} := \{n\psi(n)\,:\, n \in H_1\}.$$ We say that $\psi$ is an 
{\em  intrinsically Lipschitz map in the FSSC sense}  if  exists $K>0$ such that
\begin{equation}
d(1_G, \pi_{H_2}(x^{-1}x') )\leq K d(1_G, \pi_{H_1}(x^{-1}x') ), \qquad \forall x,x'\in \Gamma _ {\psi} .
\end{equation}
This last definition has several equivalent expressions when $H_1$ is normal.  
We point out that if a metric Lie group is a semidirect product $G=N  \rtimes H$, then on $G/N$ there is a natural metric that makes the quotient $\pi : G\to G/N$ a submetry, see  \cite[Corollary~2.11]{MR3460162}. However, under the identification $G/N\simeq H$,  this natural distance is biLipschitz equivalent to the one of $G$ restricted to $H$ exactly when the projection on $H$ is Lipschitz, see Proposition~\ref{propEQUIVprop}.

  \begin{prop}\label{ideaCarnot} 
  Let $G=N  \rtimes H$ be a metric group that is a semidirect product.
  
    \noindent{\rm \bf(\ref{ideaCarnot}.i)} If $G$ is a  Carnot group with $N,H$   homogeneous subgroups, then 
 
   \noindent{\rm \bf(\ref{ideaCarnot}.i.a)}
 the map $\pi_H:G\to H$ is a Lipschitz homomorphism
  and
  
  \noindent{\rm \bf(\ref{ideaCarnot}.i.b)} 
  the map $\pi_N:G\to N$ is Lipschitz at $1_G$.
  
  In general, if    {\rm \bf(\ref{ideaCarnot}.i.a)} or   {\rm \bf(\ref{ideaCarnot}.i.b)} holds then the following three properties hold:
  
  \noindent{\rm \bf(\ref{ideaCarnot}.ii)} for all $g\in G$ the set $gN$ is an intrinsically Lipschitz graph.
  
    \noindent{\rm \bf(\ref{ideaCarnot}.iii)}
       If $\psi :N\to H$ is intrinsically Lipschitz map in the FSSC sense, then $\phi : G/H \to G$ defined  as
  \begin{equation}\label{FSSC_Noi}
\phi(gH):=\pi_N(g)   \psi (\pi_N(g)), \quad \forall g \in G
\end{equation}
is an intrinsically Lipschitz section of the projection $\pi :G \to G/H$, with $\phi (G/H)=\Gamma _ {\psi}$. 

    \noindent{\rm \bf(\ref{ideaCarnot}.iv)}
Vice versa, if $\phi :G/H \to G$ is an intrinsically Lipschitz section of  $\pi :G \to G/H,$ then the map $\psi :N \to H$ defined as 
  \begin{equation}\label{Noi_FSSC}
\psi (n):=n^{-1}\phi (nH), \quad \forall n\in N
\end{equation} is an intrinsically Lipschitz map in the FSSC sense, with $\phi (G/H)=\Gamma _ {\psi}$.
  
\end{prop}

The rest of the paper is organized as follows. In $\mathbf{Section \, \ref{sec2}}$ we discuss the definition of intrinsically Lipschitz sections, we show some basic properties like   their continuity, we prove
Proposition~\ref{linkintrinsicocneelip}, and finally we show that in the case when the metric space $X$ is geodesic and the fibers of the projection $\pi$ are one-dimensional and   continuously oriented, the infima of each family of intrinsically Lipschitz sections is so too (see Proposition~\ref{propINFIMUM}).
$\mathbf{Section \, \ref{sec3}}$ contains the proof of Ascoli-Arzel\`a compactness theorem, Theorem~\ref{thm1}. 
$\mathbf{Section \, \ref{sec4}}$ is dedicated to Ahlfors regularity, i.e., the proof of Theorem~\ref{thm2}. 
$\mathbf{Section \, \ref{sec5}}$  contains  the proof of the Extension Theorem (Theorem~\ref{thm3}) using the equivalence between intrinsically Lipschitz sections and level sets of Lipschitz maps that are biLipschitz on fibers. 
$\mathbf{Section \, \ref{sec6}}$ is specialized to the applications of this theory when the metric space $X$ is a Carnot group or, more generally, a metric group.

\textbf{Acknowledgments}. 
A part of this work was done while the authors were at the University of Jyv\"askyl\"a. The excellent work conditions are acknowledged.
During the writing of this work	the authors were partially supported by 
the Academy of Finland 
 (grant 322898
`\emph{Sub-Riemannian Geometry via Metric-geometry and Lie-group Theory}').
E.L.D was also partially supported by
 the Swiss National Science Foundation
(grant 200021-204501 `\emph{Regularity of sub-Riemannian geodesics and applications}')
and by the European Research Council  (ERC Starting Grant 713998 GeoMeG `\emph{Geometry of Metric Groups}').  
The authors thank Andrea Merlo and Danka Lucic for reading an earlier version of this paper.
 
\section{Intrinsically  Lipschitz sections} 
\label{sec2}
\subsection{Preliminaries}
\label{sec:prel}
In this paper $X$ will denote a metric space, whose distance will be denote arbitrarily by $d$, or $d_X$ if there might be confusion with other distances. Instead, the set $Y$ will sometimes be a topological space, and some other times will be a metric space with topology induced by the distance.

As common in topology, a map  
 $\pi : X \to Y$ is called a {\em  quotient map} if it is continuous, surjective and open.
 Distinguished examples of quotient maps are Lipschitz quotients and in particular  submetries, whose definition now we recall. Such notions have been introduced in \cite{MR1736929, Berestovski}.


A map $\pi :X \to Y$ between metric spaces is said to  be a {\em Lipschitz quotient with constant $k$}, with $k\geq 1$ (or briefly a $k$-{\em Lipschitz quotient} or {\em Lipschitz quotient}, if there is no need to specify $k$)
 if
\begin{equation}\label{lipquotient}
B_{d_Y} (\pi (x), r /k) \subset \pi (B_{d_X} (x,r)) \subset B_{d_Y} (\pi(x), kr), \qquad \forall x\in X, \forall r>0.
\end{equation}
If $k=1$, the map $\pi$ is called {\em submetry} and \eqref{lipquotient} simplifies as
\begin{equation}\label{submetryquotient}
 \pi (B_{d_X} (x,r)) = B_{d_Y} (\pi(x), r), \quad \forall x\in X, \forall r>0.
\end{equation}

We stress that being a Lipschitz quotient is more restrictive that being a  quotient map that is Lipschitz. In fact, 
 \eqref{lipquotient} also gives a co-Lipschitz condition. Hence, Lipschitz quotients are uniformly open. In next remark we show that every  quotient map has some type of uniform openness.
 
 \begin{rem}\label{remnewok}
Let $\pi :X \to Y$ be an open map, $K\subset X$ be a compact set and $y\in Y.$ Then $\pi$ is uniformly open on $K \cap \pi ^{-1} (y),$ in the sense that, for every $\varepsilon >0$ there is a neighborhood $U_\varepsilon $ of $y$ such that
\begin{equation*}
U_\varepsilon \subset \pi (B(x,\varepsilon )), \quad \forall x \in K \cap \pi ^{-1} (y).
\end{equation*}
Indeed, since $\pi $ is open, for every $x\in \pi ^{-1} (y)$ there is a neighborhood $U_{\varepsilon , x} $ of $y$ that is contained in $\pi (B( x, \frac  \varepsilon 2) ).$ Moreover, because $K$ is compact, we know that there is a finite $\frac \varepsilon 2$-net $N\subset K \cap \pi ^{-1} (y).$ Finally, if we put $U_\varepsilon := \bigcap _{ x \in N} U_{\varepsilon , x},$ we have that for all $x\in K \cap \pi ^{-1} (y)$ there is a point $\bar x \in N$ such that $d(x,\bar x) < \frac \varepsilon 2$ and
\begin{equation}
U_\varepsilon \subseteq  U_{\varepsilon , \bar x} \subseteq \pi (B( \bar x,  \varepsilon /2) ) \subseteq \pi (B( x,  \varepsilon ) ),
\end{equation}
as wished. 
  \end{rem}

\subsection{Equivalent definitions for intrinsically  Lipschitz  sections}
\label{sec:equiv_def}

\begin{defi}[Intrinsic Lipschitz section]\label{Intrinsic Lipschitz section}
Let $X=(X,d)$ be a metric space and let $Y$ be a topological space. We say that a map $\phi :Y \to X$ is a {\em section} of a quotient map $\pi :X \to Y$ if
\begin{equation*}
\pi \circ \phi =\mbox{id}_Y.
\end{equation*}
Moreover, we say that $\phi$ is an {\em intrinsically Lipschitz section} with constant $L\ $ if in addition
\begin{equation*}
d(\phi (y_1), \phi (y_2)) \leq L d(\phi (y_1), \pi ^{-1} (y_2)), \quad \mbox{for all } y_1, y_2 \in Y.
\end{equation*}
Necessarily, $L\geq 1$.
Equivalently, we are requesting that  that
\begin{equation*}
d(x_1, x_2) \leq L d(x_1, \pi ^{-1} (\pi (x_2))), \quad \mbox{for all } x_1,x_2 \in \phi (Y) .
\end{equation*}
\end{defi}

We further rephrase the definition as saying that $\phi(Y)$, which we call the {\em graph} of $\phi$, avoids some particular sets (which depend on $L$ and $\phi$ itself):


\begin{prop}\label{propo_ovvia} Let $\pi :X \to Y$  be a  quotient map between a metric space and a topological space, $\phi: Y\to X$ be a section of $\pi$, and $L\geq 1$.
Then $\phi$ is $L$-intrinsically Lipschitz if and only if
\begin{equation*}
 \phi  (Y) \cap  R_{x,L} = \emptyset , \quad \mbox{for all } x \in \phi (Y),
\end{equation*}
where $$R_{x,L} := \left\{ x'\in X \;|\;   L d(x', \pi ^{-1} (\pi (x))) <   d(x', x)\right\}.$$


%
\end{prop}

Proposition~\ref{propo_ovvia} is a triviality, still its purpose is to stress the analogy with the FSSC theory. Indeed, the sets $R_{x,L}$ are the intrinsic cones considered in Carnot groups, see Section~\ref{sec:various_notions}.

The case when $\pi :X \to Y$ is a Lipschitz quotient should be considered as the trivial case of our study. Indeed, Condition \eqref{lipquotient} implies that 
$$\frac{1}{k}d(\pi(x_1),  \pi(x_2))\leq d(x_1, \pi ^{-1} (\pi(x_2))) \leq k d(\pi(x_1),  \pi(x_2)),\qquad \forall x_1,x_2\in X.$$ Hence, being intrinsically Lipschitz is equivalent as being a biLipschitz embedding:
\begin{equation*}
\hat L^{-1} \,d( y_1,  y_2 )\leq
d(\phi (y_1), \phi (y_2)) \leq \hat L \,d( y_1,  y_2 ), \quad \mbox{for all } y_1, y_2 \in Y.
\end{equation*}
 We formally state this easy proposition for the record:

\begin{prop}\label{tivial_Lip_quo} Let $\pi :X \to Y$  be a  quotient map between a metric space and a topological space. If one can metrize $Y$ in such a way that $\pi :X \to Y$ becomes a Lipschitz quotient, then a section $\phi: Y\to X$
  of $\pi$ is intrinsically Lipschitz if and only if it is a biLipschitz embedding.
\end{prop}


\begin{exa}\label{baby_example}
The reader could keep in mind the classical fundamental example: For $n,m\in \N$ one considers the projection map $\pi: \R^{n+m}\to \R^n$ on the first $n$ variables, so that  every map $f:\R^n\to \R^m$ has a graphing map $x\in \R^n\mapsto (x,f(x))\in\R^{n+m}$ that is a section of $\pi$. Moreover, such a section is intrinsically Lipschitz (in the sense of  Definition~\ref{Intrinsic Lipschitz section}) if and only if $f$ is Lipschitz in the classical sense.
\end{exa}

\subsection{Intrinsic   Lipschitz   with respect to  families of sections}
In this section we continue to fix a  quotient map $\pi :X \to Y$ between a metric space $X$ and a topological space $Y$.
%

\begin{defi}[Intrinsic Lipschitz  with respect to  a section]\label{defwrtpsinew}
 Given  sections 
  $\phi, \psi :Y\to X$   
  of $\pi$. We say that   $\phi $ is {\em intrinsically $L$-Lipschitz with respect to  $\psi$ at point $\hat x$}, with $L\geq1$ and $\hat x\in X$, if
\begin{enumerate}
\item $\hat x\in \psi(Y)\cap \phi (Y);$
\item $\phi  (Y) \cap  C_{\hat x,L}^{\psi} = \emptyset ,$
\end{enumerate}
where
$$ C_{\hat x,L}^{\psi} := \{x\in X \,:\,  d(x, \psi (\pi (x))) > L d(\hat x, \psi (\pi (x)))  \}.   $$
\end{defi}

  \begin{rem} Definition~\ref{defwrtpsinew} can be rephrased as follows.
 A section $\phi  $ is intrinsically $L$-Lipschitz with respect to  $\psi$ at point $\hat x$ if
 and only if 
 there is $\hat y\in Y$ such that  $\hat x= \phi (\hat y)=\psi(\hat y)$ and
\begin{equation}\label{defintrlipnuova}
 d(x, \psi (\pi (x))) \leq L d(\hat x, \psi (\pi (\hat x))), \quad \forall x \in \phi (Y), 
\end{equation}
which equivalently means 
\begin{equation}\label{equation28.0}
 d(\phi (y), \psi (y)) \leq L d(\psi(\hat y), \psi (y)) ,\qquad \forall y\in Y. 
\end{equation}
  \end{rem}  

  \begin{rem} We stress that Definition~\ref{defwrtpsinew} does not induce  an equivalence relation, because of lack of symmetry in the right-hand side of \eqref{equation28.0}. Still, obviously every section is intrinsically  Lipschitz with respect to  itself. 
  \end{rem}   

The proof of Proposition~\ref{linkintrinsicocneelip} is an immediately consequence of the following result.

 \begin{prop}
   Let $X  $ be a metric space, $Y$ a topological space, and $\pi :X \to Y$   a  quotient map. Let $L\geq1$ and $y_0\in Y$. Assume $\phi_0:Y\to X$ is an intrinsically $L$-Lipschitz section of $\pi$. Let $\phi :Y\to X$ be a section of $\pi$
   such that 
   $x_0:=\phi (y_0)=\phi _0(y_0).$
   Then the following are equivalent:
   \begin{enumerate}
\item  For some $L_1\geq1$, $\phi $ is intrinsically $L_1$-Lipschitz with respect to  $\phi_0$ at   $x_0;$
\item  For some $L_2\geq1$, $\phi $ satisfies   
\begin{equation}\label{equation3nov2021}
d(x_0,\phi (y)) \leq L_2d(x_0, \pi^{-1} (y)), \quad \forall y\in Y.
\end{equation}

\end{enumerate}
   Moreover, the constants $L_1$ and $L_2 $ are quantitatively related in terms of $L$.
   \end{prop}

    \begin{proof}  

[$(1) \Rightarrow (2)$] For every $y\in Y,$ it follows that
     \begin{equation*}
\begin{aligned}
d(\phi (y), x_0) & \leq d(\phi (y), \phi _0(y))  + d(\phi _0(y),x_0)  \\
& \leq (L_1+1)d(\phi _0(y), x_0)  \\
& \leq L(L_1+1)d(x_0, \pi^{-1} (y)),  \\
\end{aligned}
\end{equation*}
   where in the first inequality we used the triangle inequality, and in the second one the intrinsically Lipschitz property of $\phi$ with respect to   $\phi_0$ at   $x_0$. Then, in the third inequality we used the intrinsically Lipschitz property of $\phi_0.$ 
      
      [$(2) \Rightarrow (1)$]   For every $y\in Y,$ we have that
\begin{equation*}
\begin{aligned}
d(\phi (y), \phi _0(y)) & \leq d(\phi (y),x_0) +  d( x_0, \phi _0 (y)) \\
& \leq (L_2+1)  d(  \phi _0 (y)  ,  x_0),
\end{aligned}
\end{equation*}
  where in the first equality we used the triangle inequality, and in the second one we used \eqref{equation3nov2021} and that $\phi _0(y) \in \pi ^{-1} (y)$.  
   \end{proof}

\subsection{Continuity}\label{sec_cont}

An intrinsically $L$-Lipschitz  section $\phi :Y \to X$ of $\pi$ is a continuous map. Indeed, fix a point $y\in Y$ and let $x :=\phi (y)\in X$. Since $\pi$ is open at $x$, for every $\varepsilon >0$   we know that there is an open neighborhood $U_\varepsilon $ of $\pi (x)=y$ such that
\begin{equation*}
U_\varepsilon \subset \pi (B(x,\varepsilon /L)).
\end{equation*}
Hence, if $y'\in U _\varepsilon $ then there is $x'\in B(x,\varepsilon /L)$ such that $\pi(x')=y'$. That means $x'\in \pi^{-1} (y')$ and, consequently,
\begin{equation*}
d(\phi (y), \phi (y')) \leq L d(\phi (y), \pi ^{-1} (y')) \leq Ld(x,x') \leq \varepsilon , 
\end{equation*} 
i.e., $\phi (U_\varepsilon) \subset B(x,\varepsilon ).$

 \begin{exa}\label{rempiopen}
We underline that the fact that $\pi$ is open is a fundamental property in order to obtain the continuity of $\phi.$ Indeed, if we consider $X=Y=\R,$ $A\subset \R$ and $f:A\subset \R \to \R$ be a non-necessarily continuous function with graph $\Gamma _f $. Then the function $\pi: \Gamma _f \to A$ defined as $$\pi (a,f(a))=a,\, \quad a\in A$$ may not be open but the function $\phi :A \to \Gamma _f$ given by $\phi (a)= (a,f(a))$ for $a\in A,$ is an intrinsically Lipschitz section of $\pi.$ On the other hand, it is easy to see that $\pi$ is open if and only if $f$ is a continuous map.
\end{exa}

   \subsection{Infima of intrinsically Lipschitz maps}
 In the case when the metric space $X$ is geodesic and the fibers of the projection $\pi$ are one-dimensional and are continuously oriented, we could consider infima of a family of sections. Possibly, we need to deal with the possibility of values equal to $-\infty$. In next result we prove that if we have a family of intrinsically uniformly  Lipschitz sections, then the infimum is an intrinsically    Lipschitz section, with the possibility of a different value of the intrinsically Lipschitz constant. This latter fact is in accord with Franchi-Serapioni result \cite[Proposition 4.0.8]{FS16}.
    
   \begin{prop}\label{propINFIMUM}
    Let $X$ be a metric  space, $Y$   a topological space, and $\pi:X\to Y$ a quotient map.
    Assume that $X$ is a geodesic space, that there exists a continuous map $\tau:X\to\R$ that is a homeomorphism on the fibers of $\pi$, and  that for each $y\in Y$ the set $ \tau|_{\pi^{-1}(y)}^{-1} (-\infty, 0    )$ is  boundedly compact.
     Let $k\geq 1$, $J$  a set, and for $j \in J$ let $\phi_j :Y \to X$ be an intrinsically $k$-Lipschitz sections. Then 
     either the function $$y\in Y \mapsto \inf\{ \tau(\phi_j(y) ): j\in J\} \in \{-\infty\}\cup\R$$ is constantly equal to $-\infty$
 or    
      the map $\phi :Y \to X$  defined as
    \begin{equation*}
\phi (y):=   \tau|_{\pi^{-1}(y)}^{-1} (\inf\{ \tau(\phi_j(y) ): j\in J\}  )
\end{equation*}
 is well defined on all of $Y$ and it is an intrinsically $k$-Lipschitz section.  
    \end{prop}
     
       \begin{proof}  For each $y$ we define $h(y):=\inf\{ \tau(\phi_j(y) ): j\in J\} \in [-\infty, \infty).$
  Assume that there exists $y_0\in Y$ such that $h(y_0) \ne -\infty$.
  We shall prove some bounds that will also imply that $h(y) \ne -\infty$, for all $y\in Y$.
   Let $y_1,y_2 \in Y.$ 
 For the moment, let us assume that   $h(y_1), h(y_2) \ne -\infty$ so that 
 $\phi(y_i)=\tau|_{\pi^{-1}(y_i)}^{-1}(h(y_i))$, $i=1,2$, is defined as a point in $X$.
    By the definition of infimum, for all $\varepsilon >0$  there is $j_i\in J$ such that   $h(y_i)\leq  \tau(\phi_{j_i}(y_i)) \leq h(y_i)+\varepsilon $, with $i=1,2$, and since $  \tau|_{\pi^{-1}(y_i)}^{-1}$ is continuous, we can also assume that 
      \begin{equation}\label{4nov1532} d( \phi_{j_i}(y_i), \phi(y_i))<\eps.\end{equation}  
 Fix $\eps>0$ and set    $x_i:= \phi_{j_i}(y_i)$.

We want to prove that \begin{equation}\label{non_so}d( x_1, x_2)\leq
k'd( x_2, \pi ^{-1}(y_1)) .\end{equation}
  We consider  $ \bar x_1 \in \pi ^{-1}(\pi(x_1))$ such that $d(x_2, \pi ^{-1}(\pi(x_1))) = d(x_2,\bar x_1)$.
  Let $\gamma$ be a geodesic between $x_2$ and $\bar x_1$.
  Without loss of generality we assume that 
$$\tau(\phi_{j_1}(y_1)) \leq \tau(\phi_{j_2}(y_1))\qquad \text{ and  }\qquad\tau(\phi_{j_2}(y_2)) \leq \tau(\phi_{j_1}(y_2)).$$ Hence,  on the curve $\pi(\gamma)$ there is a point $y^*$ such that $$\tau(\phi_{j_1}(y^*)) = \tau(\phi_{j_2}(y^*))\qquad \text{ and hence  } z^*:=\phi_{j_1}(y^*)) =  \phi_{j_2}(y^*) . $$
Be aware that $z^*$ may not be along $\gamma$, let $z$ be a point on $\gamma$ that is mapped via $\pi$ to $y^*$.
  
  Then we use the triangle inequality with $z^*$, the intrinsically Lipschitz property of $\phi_{j_1}$ (since both $z^*$ and $x_1$ are in its graph),
   the triangle inequality with $x_2$, the intrinsically Lipschitz property of $\phi_{j_2}$ (since both $z^*$ and $x_2$ are in its graph), that $z$ is along $\gamma$,
  we obtain
\begin{equation*} 
\begin{aligned}
d( x_1, x_2)& \leq d( x_1,z^*)+ d(z^*, x_2)\leq
k d( \bar x_1,z^*)+ d(z^*, x_2)\\
& \leq
k (d( \bar x_1,x_2)+ d(x_2,z^*))+d(z^*,x_2)\\ & \leq kd( \bar x_1,x_2)+ k(k+1)d( z,x_2) \leq (2k+k^2) d( \bar x_1,x_2) .
\end{aligned}
\end{equation*}
Thus we proved \eqref{non_so}.

Finally, putting together  \eqref{non_so} and  \eqref{4nov1532} and letting $\varepsilon \to 0$ we get that
\begin{equation}\label{non_so1}d( \phi(y_1), \phi(y_2))\leq
kd( \phi(y_2), \pi ^{-1}(y_1)) .\end{equation}

Now we shall discuss why in the case of the existence of 
 $y_0\in Y$ such that $h(y_0) \ne -\infty$, then we have that the map $\phi$ is well posed, i.e. $h(y) \ne -\infty$, for all $y\in Y$.
The reason is that the same calculation that lead us to \eqref{non_so1} with $y_1=y_0$ and $y_2=y$ arbitrary will give that the values 
$\{\phi_j(y):j\in J\}$ leave in a bounded subset of the fiber $\pi^{-1}(y)$. Because fibers are assumed to be boundedly compact, we have that 
$\{\tau\phi_j(y):j\in J\}$ is a bounded subset of $\R$, which therefore admits a finite minimum.
     \end{proof}

\subsection{The induced distance}\label{sec_induced}
\begin{defi}
 Let $X$ be a metric  space, $Y$   a topological space, and $\pi:X\to Y$ a quotient map. We define the function $d_{\mathcal {F}}: X \times X \to \R^+$  as
\begin{equation}\label{defidf0}
d_{\mathcal {F}}(g_1, g_2):= \frac 1 2 \left( d(g_1,  \mathcal {F}_{g_2} ) + d(g_2,  \mathcal {F}_{g_1} ) \right), \quad \mbox{for all } g_1, g_2 \in X,
\end{equation}
where $\mathcal {F}_{g_i} := \pi ^{-1} (\pi (g_i))$ for $i=1,2$ and $d(g_1,  \mathcal {F}_{g_2} ) := \inf \{ d(g_1, p) \, :\, p\in  \mathcal {F}_{g_2} \}.$
\end{defi}
In general, the map $d_{\mathcal {F}}$ satisfy the following properties:
\begin{itemize}
		\item[(i)] $d_{\mathcal {F}}$ is symmetric, by construction;
		\item[(ii)]  $d_{\mathcal {F}} (g_1, g_2) =0$ if and only if $  \mathcal {F}_{g_1}=\mathcal {F}_{g_2}$;
		\item[(iii)]  $d_{\mathcal {F}} $ does not necessarily satisfies the triangle inequality (see Proposition~\ref{pseudodistance});
\item[(iv)] if we restrict   $d_{\mathcal {F}} $ to a subset of the form $\phi (Y)$ with $\phi $ a section of $\pi$ as in Definition~\ref{Intrinsic Lipschitz section}, then every two points of $\phi (Y)$ have positive distance. 
			\end{itemize}

In (ii), we used that each leave $\mathcal {F}_{g}$ is  a closed set; indeed, $d_{\mathcal {F}} (g_1, g_2) =0$ if and only if $ d(g_1,  \mathcal {F}_{g_2} ) = d(g_2,  \mathcal {F}_{g_1} ) =0$ which is equivalent to say that $g_1$ and $g_2$ belong to the same leaf of $X.$

Notice that
\begin{enumerate}
\item if $\pi : X\to Y$ be a $k$-Lipschitz quotient, then $d(g_1, \mathcal {F}_{g_2}) \leq k d( \mathcal {F}_{g_1},  \mathcal {F}_{g_2});$
\item if $\pi : X\to Y$ be a submetry, then $d(g_1, \mathcal {F}_{g_2}) = d( \mathcal {F}_{g_1},  \mathcal {F}_{g_2}).$
\end{enumerate}

 \begin{prop}\label{pseudodistance}
 Let $X$ be a metric  space, $Y$   a topological space, and $\pi:X\to Y$ a quotient map. If $\phi :Y \to X$ is an intrinsically $L$-Lipschitz section of $\pi$ with $L\geq1,$ then
 \begin{itemize}
		\item[(i)]  when restricted to $\phi (Y)$, the functions $d$ and $d_{\mathcal {F}} $ are $L$-biLipschitz  equivalent; more precisely, it holds
		\begin{equation}\label{equiBIlip}
d_{\mathcal {F}} (p_1,p_2) \leq d(p_1,p_2) \leq L d_{\mathcal {F}} (p_1,p_2),  \quad \forall p_1,p_2 \in \phi (Y).
\end{equation}
		\item[(ii)] $d_{\mathcal {F}}$ when restricted to $\phi (Y) $ is a pseudo distance satisfying the weaker triangle inequality up to multiplication by $L;$ 
		\item[(iii)] it holds
		\begin{equation}\label{inclusionepalle}
\pi \left(B\left(p,\frac r L\right) \right) \subset \pi ( B(p,r) \cap \phi (Y)) \subset \pi (B(p,r)), \quad \forall p\in \phi (Y), \forall r>0.
\end{equation}
			\end{itemize}
  \end{prop}

  \begin{proof}
(i). The left inequality in \eqref{equiBIlip} follows from the simple fact that $p_i \in \mathcal {F}_{p_i}$ and so $d(p_1,  \mathcal {F}_{p_2} ) \leq d(p_1,p_2).$ Regarding the right one, since $\phi$ is intrinsically Lipschitz, we have that 
\begin{equation*}
 d(p_1,p_2) \leq L d(p_1,  \mathcal {F}_{p_2} ) \quad \mbox{ and }  \quad d(p_1,p_2) \leq L d(p_2,  \mathcal {F}_{p_1} ),
\end{equation*}
and, consequently,
\begin{equation*}
 d(p_1,p_2) \leq  \frac 1 2 L(d(p_1,  \mathcal {F}_{p_2} )+d(p_2,  \mathcal {F}_{p_1} )) =Ld_{\mathcal {F}}(p_1,p_2).
\end{equation*}
Hence,  \eqref{equiBIlip} holds.

(ii). We observe that $d_{\mathcal {F}}$ is symmetric, by construction and $d_{\mathcal {F}} (p, p) =0$ because $p\in \mathcal {F}_{p}.$ Moreover, the function $d_{\mathcal {F}}$ satisfies the weaker triangle inequality thanks to (i) and to the fact that  $d$ satisfies the triangle inequality; indeed, we get that
 \begin{equation*}
d_{\mathcal {F}}(p_1,p_2) \leq d(p_1,p_2) \leq  d(p_1,p_3) + d(p_3,p_2) \leq  L (d_{\mathcal {F}}(p_1,p_3) + d_{\mathcal {F}}(p_3,p_2)), 
\end{equation*}
for every $p_1, p_2, p_3 \in \phi (Y).$

(iii). Regarding the first inclusion, fix $p\in \phi (Y), r>0$ and $q\in B(p, \frac r L).$  We need to show that $\pi (q) \in \pi (\phi (Y) \cap B(p,r)).$ Actually, it is enough to prove that 
\begin{equation}\label{equation2.6}
\phi (\pi (q)) \in B(p,r),
\end{equation}
 because if we take $g:= \phi (\pi (q)),$ then $g\in \phi (Y)$ and 
 \begin{equation*}
 \pi (g)= \pi (\phi (\pi (q))) =\pi (q) \in \pi (\phi (Y) \cap B(p,r)).
\end{equation*}
 Hence using the intrinsically Lipschitz property of $\phi$ and the fact that $ \mathcal {F} _q= \mathcal {F} _g$ because $ \pi (g) =\pi (q),$ we have that 
 \begin{equation}
d(p,g) \leq L d(p, \mathcal {F} _g) =L d(p, \mathcal {F} _q) \leq L d(p,q) <L \frac r L =r,
\end{equation}
i.e., \eqref{equation2.6} holds, as desired.  

Finally, the second inclusion in \eqref{inclusionepalle} is trivial, since $\phi (Y)\cap B(p,r) \subset B(p,r)$.
  \end{proof}


\section{An Ascoli-Arzel\`a compactness theorem}\label{sec3}
In this section we finish the proof of Theorem~\ref{thm1}. We already proved  {\rm \bf(\ref{thm1}.ii)} in Section~\ref{sec_cont}. We next restate the missing part.
 \begin{theorem}[Compactness Theorem]
 Let $\pi:X\to Y$ be a quotient map  between a  metric space $X$ for which closed balls are compact and  a topological space $Y$.     Then:
  
\noindent{\rm \bf(i)} For all
    $K' \subset Y$ compact,
  $L\geq 1 $,     $K \subset X$ compact, and  $y_0\in Y$
   the set
 \begin{equation*}\label{set2'}
\mathcal{A}_0:= \{ \phi _{|_{K'}} : K' \to X \,| \, \phi :Y \to X \mbox{ intrinsically $L$-Lipschitz  section of $\pi$}, \phi (y_0) \in K \}
\end{equation*}
is 
 equibounded, equicontinuous, and closed in the uniform convergence topology.
 
 \noindent{\rm \bf(ii)} For all $L\geq 1 $,     $K \subset X$ compact, and  $y_0\in Y$
the set
 \begin{equation*}\label{set1'}
\{ \phi : Y \to X \,: \, \phi \text{ intrinsically $L$-Lipschitz  section of $\pi$},
 \phi (y_0) \in K \}
\end{equation*}
  is compact with respect to  the uniform convergence on compact sets.
  \end{theorem}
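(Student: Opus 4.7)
The plan is to prove the three qualitative properties in (i)---equiboundedness, equicontinuity, closedness---and then deduce (ii) via the Arzel\`a--Ascoli theorem. First I would treat equiboundedness. The key auxiliary fact is that the function $F(x,y):=d(x,\pi^{-1}(y))$ is upper semicontinuous on $X\times Y$. To see this, given $(x_\ast,y_\ast)$ and $\varepsilon>0$, choose $\xi\in\pi^{-1}(y_\ast)$ with $d(x_\ast,\xi)<F(x_\ast,y_\ast)+\varepsilon/2$; because $\pi$ is open at $\xi$, $\pi(B(\xi,\varepsilon/2))$ contains a neighbourhood of $y_\ast$, so for $(x,y)$ in a small enough neighbourhood of $(x_\ast,y_\ast)$ there is $\xi'\in B(\xi,\varepsilon/2)\cap\pi^{-1}(y)$, and a triangle estimate gives $F(x,y)\le d(x,\xi')\le F(x_\ast,y_\ast)+\varepsilon$. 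A USC function is bounded above on any compact set, so on $K\times(K'\cup\{y_0\})$ there is a uniform bound $M$, and the intrinsic Lipschitz inequality $d(\phi(y),\phi(y_0))\le L\,F(\phi(y_0),y)$ places $\phi(K')$ inside $\overline{B(K,LM)}$, which is compact because $X$ is boundedly compact.

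For equicontinuity at an arbitrary $\hat y\in Y$ I would invoke Remark~\ref{remnewok} with the compact set $\widetilde K:=\overline{B(K,LM)}$ from the previous step: given $\varepsilon>0$, there is a neighbourhood $U_\varepsilon$ of $\hat y$ with $U_\varepsilon\subseteq\pi(B(x,\varepsilon/L))$ for every $x\in\widetilde K\cap\pi^{-1}(\hat y)$. Since every $\phi$ in the family satisfies $\phi(\hat y)\in\widetilde K\cap\pi^{-1}(\hat y)$ (by the equiboundedness argument applied to $\{\hat y,y_0\}$), for $y\in U_\varepsilon$ one picks $x'\in B(\phi(\hat y),\varepsilon/L)\cap\pi^{-1}(y)$ and the Lipschitz inequality yields $d(\phi(\hat y),\phi(y))\le L\cdot\varepsilon/L=\varepsilon$, uniformly in $\phi$.

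For part (ii) I would proceed as follows. Applying the equiboundedness to $K'=\{y,y_0\}$ for each $y\in Y$, every $\phi$ in the family takes its value $\phi(y)$ in a compact set $K_y\subset X$ that depends only on $y,y_0,K,L$; hence the family $\mathcal{A}$ embeds into the Tychonoff-compact product $\prod_{y\in Y}K_y$ endowed with the pointwise topology. It is now easy to check that $\mathcal{A}$ is closed in this pointwise topology: $\pi\circ\phi=\mathrm{id}_Y$ passes to pointwise limits by continuity of $\pi$; the inequality $d(\phi(y_1),\phi(y_2))\le L\,d(\phi(y_1),\pi^{-1}(y_2))$ does too, since both $z\mapsto d(\phi(y_1),z)$ and $z\mapsto d(z,\pi^{-1}(y_2))$ are $1$-Lipschitz; and $\phi(y_0)\in K$ since $K$ is closed. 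Combined with the equicontinuity from the previous paragraph, the Arzel\`a--Ascoli theorem for continuous maps from a topological space into a uniform space then upgrades pointwise compactness to compactness in the compact-open topology, which is exactly uniform convergence on compact sets. The closedness in (i) now drops out: if $\phi_n|_{K'}\to\phi^\ast$ uniformly on $K'$, (ii) allows extraction of a subnet of the full $\phi_n$ converging in the compact-open topology to some $\widetilde\phi\in\mathcal{A}$, and $\widetilde\phi|_{K'}=\phi^\ast$, so $\phi^\ast\in\mathcal{A}_0$.

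The main technical hurdles I anticipate are verifying the upper semicontinuity of $F$---this is precisely where the openness of $\pi$ is indispensable, consistent with Example~\ref{rempiopen}---and invoking the correct version of Arzel\`a--Ascoli, because $Y$ is only a topological space rather than a metric one. Both steps are standard once organised carefully; the substantive content is that the intrinsic Lipschitz inequality, coupled with openness of $\pi$, forces the family to enjoy the same equicontinuity and compactness as ordinary Lipschitz families in the purely metric case.
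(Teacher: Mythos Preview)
Your proof is correct and follows the same overall architecture as the paper's---equiboundedness from the intrinsic Lipschitz inequality plus compactness of $K$, equicontinuity via the uniform-openness lemma (Remark~\ref{remnewok}), then Arzel\`a--Ascoli. The organizational differences are worth noting. First, you package equiboundedness through the upper semicontinuity of $F(x,y)=d(x,\pi^{-1}(y))$, obtaining a bound uniform in $y\in K'$; the paper instead bounds $d(\phi(y_0),\phi(y))$ pointwise for each $y$ via $L\max_{x\in K}d(\pi^{-1}(y),x)$, which suffices since pointwise relative compactness is all Arzel\`a--Ascoli needs. Second, you reverse the logical order between closedness and (ii): the paper checks closedness of $\mathcal A_0$ directly by passing the defining inequality to pointwise limits and then invokes a ``diagonal argument'' for (ii); you instead establish (ii) first via Tychonoff compactness of $\prod_y K_y$ together with the general Arzel\`a--Ascoli theorem for maps from a topological space into a metric space, and only then extract the closedness of $\mathcal A_0$ as a corollary. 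Your route is arguably cleaner on this last point, since $Y$ carries no countability hypothesis and the general-topological Arzel\`a--Ascoli theorem handles that situation without further comment, whereas the paper's ``diagonal argument'' is left as a sketch.
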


\begin{proof}
\noindent{\rm \bf(i).} We shall prove that for all
    $K' \subset Y$ compact,
  $L\geq 1 $,     $K \subset X$ compact, and  $y_0\in Y$
   the set $\mathcal{A}_0$
 is 
\begin{description}
\item[(a)]  equibounded;
\item[(b)] equicontinuous;
\item[(c)] closed.
\end{description}

(a). Fix a compact set $K' \subset Y$ such that  $y_0\in K'.$ We shall prove that for every $y\in K'$ 
 \begin{equation*}\label{set3}
\mathcal{A}:= \{ \phi (y) \,: \, \phi \in \mathcal{A}_0\} 
\end{equation*}
 is relatively compact in $X.$ Fix a point $x_0 \in K$ and let $k:=$ diam$_d(K)$ which is finite because $K$ is compact in $X.$ Then, for every $\phi$ that belongs to $\mathcal{A}_0$, we have that
 \begin{equation*}
\begin{aligned}
d(x_0,\phi (y) ) \leq d(x_0,\phi (y_0) ) + d(\phi (y_0),\phi (y) ) \leq k + Ld(\pi ^{-1}(y), \phi (y_0) ) \leq k +L \max _{ x\in K} d(\pi ^{-1}(y), x ),
\end{aligned}
\end{equation*}
where in the first equality we used the triangle inequality, and in the second one we used the fact that $\phi \in \mathcal{A}_0$ and $x_0\in K$. Finally, in the last inequality we used again $\phi (y_0)\in K$ and that the map $X\ni x \mapsto d(\pi ^{-1}(y) ,x)$ is a continuous map and so admits maximum on compact sets. Since closed balls on $X$  are compact, we infer  that the set $\mathcal{A}$  is relatively compact in $X,$ as desired.

(b). We shall to prove that for every $y\in K'$ and every $\varepsilon >0$ there is an open neighborhood $U_y \subset K' \subset Y$ such that for any $\phi \in \mathcal{A}$  and any $y'\in U_y,$ it follows 
\begin{equation}\label{puntob}
d(\phi (y), \phi( y')) \leq \varepsilon .
\end{equation}

Because of equiboundedness, we have that for every $\phi \in \mathcal{A}_0$ and $ y \in Y$ the set $\phi (y)$ lies within a compact set $K_y$ and so, by Remark~\ref{remnewok}, $\pi$ is  uniformly open on $K_y \cap \pi ^{-1} (y).$ Now let $U_\varepsilon $ an neighborhood of $y$ such that $U_\varepsilon \subset \pi (B(x, \varepsilon /L))$ for every $x\in K_y \cap \pi ^{-1} (y).$ Then we want to show that such  neighborhood $U_\varepsilon $ of $y$ is the set that we are looking for. Take $y'\in U_\varepsilon $ and let $x=\phi (y).$ Hence, there is $x'\in B(x, \varepsilon /L)$ with $\pi (x')=y'$ and, consequently, $x'\in \pi ^{-1}(y').$ Thus we have that for all $\phi$ belongs to $\mathcal{A}_0$
 \begin{equation*}
\begin{aligned}
d(\phi (y),\phi (y') ) \leq L d(\phi (y), \pi ^{-1}(y')) \leq Ld(x,x') \leq L \frac {\varepsilon }{L}  \leq \varepsilon ,
\end{aligned}
\end{equation*} i.e., \eqref{puntob} holds. 
Finally, since the bound is independent on $\phi ,$ we proved the equicontinuity.

(c). By (a) and (b) we can apply Ascoli-Arzel\'a Theorem to the set $\mathcal{A}_0$. Hence, every sequence in it has a converging subsequence. Moreover, this set is closed since if $\phi _h$ is a sequence in it converging pointwise  to $\phi$, then $\phi \in \mathcal{A}_0.$ Indeed, taking the limit of 
\begin{equation*}
d(\phi_h (y), \phi _h (y')) \leq L d(\pi ^{-1} (y), \phi _h (y') ),
\end{equation*}
one gets
\begin{equation*}
d(\phi (y), \phi  (y')) \leq L d(\pi ^{-1} (y), \phi (y') ).
\end{equation*}
Finally, it is trivial that the condition $\phi _h (y_0) \in K$ passes to the limit since $K$ is compact.

\noindent{\rm \bf(ii).} If follows from the latter point $(i)$ using the standard Ascoli-Arzel\'a diagonal argument.
\end{proof}

\section{Proof of  Ahlfors regularity}\label{theoremAhlforsNEW} \label{sec4}

 This section is devoted just to the proof of Theorem  \ref{thm2}. The proof is elementary and only uses the inclusions \eqref{inclusionepalle} from Section~\ref{sec_induced}. Still, we shall see in Section~\ref{sec_Ahlfors_groups} how this new result implies the theorem for intrinsically Lipschitz maps in the FSSC sense.

  \begin{proof}[Proof of Theorem \ref{thm2}]
  Let $\phi$ and $\psi$ intrinsically $L$-Lipschitz sections, with $L\geq1$.
  Fix $y\in Y.$  By Ahlfors regularity of $\phi (Y)$ with respect to $\phi_*\mu$, we know that there are $c_1,c_2, r_0>0$ such that 
     \begin{equation}\label{AhlforsNEW0}
{c_1} r^Q \leq  \phi _* \mu \big( B(\phi (y),r) \cap \phi (Y)\big)  \leq c_2 r^Q,
\end{equation}
for all $0\leq r \leq r_0.$ We would like to show that there are $c_3, c_4>0$ such that
   \begin{equation}\label{AhlforsNEW127}
 {c_3} r^Q \leq  \psi _* \mu \big( B(\psi(y),r) \cap \psi (Y)\big)  \leq c_4 r^Q,
\end{equation}
  for every $0\leq r \leq r_0.$    
  We begin noticing that, by symmetry and  \eqref{Ahlfors27ott.112}
   \begin{equation}\label{Ahlfors27ott}
C^{-1} \mu (\pi (B(\psi(y),r))) \leq \mu (\pi (B(\phi(y),r))) \leq C \mu (\pi (B(\psi(y),r))).
\end{equation}
Moreover, 
\begin{equation}\label{serveperAhlfors27}
  \psi _* \mu \big( B(\psi(y),r) \cap \psi (Y)\big) =  \mu ( \psi^{-1} \big( B(\psi(y),r) \cap \psi (Y)\big) ) = \mu ( \pi \big( B(\psi(y),r) \cap \psi (Y)\big) ), 
\end{equation}
  and, consequently, 
\begin{equation*}\label{ugualecarnot}
\begin{aligned} \psi _* \mu \big( B(\psi(y),r) \cap \psi (Y)\big) & \geq  \mu (\pi (B(\psi (y),r/L))) \geq  C^{-1}   \mu (\pi (B(\phi (y),r/L))) \\
& \geq  C^{-1}   \mu (\pi (B(\phi (y),r/L) \cap \phi (Y))) =    C^{-1} \phi _* \mu \big( B(\phi(y),r/L) \cap \phi (Y)\big) \\ & \geq   c_1C^{-1}  L^{-Q}  r^Q,
\end{aligned}
\end{equation*}
where in the first inequality we used  the first inclusion of \eqref{inclusionepalle}  with $\psi$ in place of $\phi$, and in the second one we used \eqref{Ahlfors27ott}. In the  third  inequality we used the second inclusion of \eqref{inclusionepalle} and in the first equality we used  \eqref{serveperAhlfors27}  with $\phi$ in place of $\psi.$ Moreover, in a similar way we have that 
\begin{equation*}
\begin{aligned} 
\psi _* \mu \big( B(\psi(y),r) \cap \psi (Y)\big) & \leq  \mu (\pi (B(\psi (y),r))) \leq C  \mu (\pi (B(\phi (y),r)))\\
& \leq C  \mu (\pi (B(\phi (y), Lr) \cap \phi (Y)))= C  \phi _* \mu \big( B(\phi(y),Lr) \cap \phi (Y)\big) \\
& \leq  {c_2} C  L^{Q} r^Q.
\end{aligned}
\end{equation*}
Hence, putting together the last two inequalities we have that \eqref{AhlforsNEW127} holds with ${c_3} = c_1C^{-1} L^{-Q}$ and $c_4 = {c_2} C  L^{Q}.$ 
  \end{proof}

\begin{exa}\label{exaAhlfors1711}
Here is an example where some intrinsically Lipschitz sections gives Ahlfors regular graphs and some don't. One can modify Example~\ref{rempiopen} to obtain more pathological examples.
Let $Y =[0,1]  $ be the unit interval and let $X:=I_0 \cup I_1    \subset \R^2$ with $I_i:=\{( x,i)\,:\, x\in [0,1] \},$ for $i=0,1$. 
Here, $X$ is endowed with the following distance:
on pair of points in $I_0$ we consider the Euclidean distance $d_E$ from the plane $\R^2$,
on pair of points in $I_1$ we consider
  $\sqrt d_E,$ and the distance from a  point in $I_0$ to one in $I_1$ is equal to 1, so the triangle inequality is satisfied.
Let the projection $\pi :X\to Y$ be  $\pi ( x,y):=x$.
Then for $i=0,1$ we consider the sections $\phi_i :Y \to X $ defined as  $ \phi_i (x)   :=(x,i).$
Both these two sections  are   intrinsically 1-Lipschitz.
However,  $\phi_0(Y)$ is $1$-Ahlfors regular and  $\psi_1(Y)$ is $2$-Ahlfors regular.  
The example could easily be modified to also have a connected space $X$. And considering instead of $\sqrt d_E$ any other distance on $I_1$, with diameter 1, one can have that   $\psi_1(Y)=I_1 $ is not Ahlfors regular.
\end{exa}

\section{Level sets and extensions}\label{sec5}

In this section we prove Theorem~\ref{thm3}. We shall both generalize and simplify Vittone's argument from \cite[Theorem~1.5]{Vittone20}. We need to mention that there have been several earlier partial results on extensions of Lipschitz graphs, as for example in \cite{FSSC06},   \cite[Proposition~4.8]{MR3194680}, 
   \cite[Proposition~3.4]{V12}, 
 \cite[Theorem~4.1]{FS16}. 
  Regarding extension theorems in metric spaces, the reader can see \cite{ALPD20} and its references.

  \begin{proof}[Proof of Theorem~\ref{thm3}.i]
  Let $f:X\to Z$ and $z_0\in Z$ as in the assumptions of part (\ref{thm3}.i).
We begin recalling that by assumption for every $y\in Y$ the map $f_{|_{\pi^{-1} (y)}} : \pi^{-1} (y) \to Z$ is a biLipschitz homeomorphism  and so it is surjective. Namely, for every $y\in Y$ there is a unique $x \in \pi^{-1} (y)$ such that $f(x)=z_0.$ Hence, it is natural to define $\phi (y):=x$ in such a way \eqref{equationluogoz0_intro} holds trivially. Moreover, we claim that the just-defined section $\phi : Y\to X $ is intrinsically $(1+L^2)$-Lipschitz. Indeed,  for each $y_1, y_2\in Y$ we consider the only points  $x_1 \in \pi^{-1} (y_1) \cap f^{-1} (z_0)$ and $x_2 \in \pi^{-1} (y_2) \cap f^{-1} (z_0),$ and then we shall prove \eqref{eq:def:ILS}, with constant $1+L^2$, showing that
\begin{equation}\label{dis27set}
d(x_1,x_2) \leq (1+L^2) d(x_1, \pi ^{-1}( y_2)).
\end{equation}
For each $\eps>0$, let $\bar x_2 \in \pi ^{-1}( y_2)$ such that 
\begin{equation}\label{dis_aggiustiamo}
d(x_1, \bar x_2)\leq d(x_1, \pi ^{-1}( y_2)) + \eps.\end{equation} Then it follows that 
\begin{equation*}
\begin{aligned}
d(x_1,x_2) & \leq d(x_1, \bar x_2)+  d(\bar x_2, x_2)\\
&  \leq  d(x_1,\bar x_2)+ L d(f(\bar x_2),f(x_2)) \\
& =  d(x_1,\bar x_2)+ L d(f(\bar x_2),f(x_1))\\
& \leq (1+L^2) d(x_1, \bar x_2) \stackrel{\eqref{dis_aggiustiamo}}{\leq}   (1+L^2)  ( d(x_1, \pi ^{-1}( y_2)) + \eps),
\end{aligned}
\end{equation*}
where in the first inequality we used the triangle inequality and in the second inequality we used the  co-Lipschitz property of $f$ on the fiber $\pi ^{-1}( y_2)$; in the  equality we used the fact that $f(x_1)=f(x_2)=z_0$ and finally we used  the Lipschitz property of $f$. 
Consequently, by the arbitrariness of $\eps$, we deduce that  \eqref{dis27set} is true and the proof is complete.
\end{proof}

\medskip

  \begin{proof}[Proof of Theorem \ref{thm3}.ii]
    Let $k$, $L$, $\rho$, $\tau$, and $\{\phi_{\tau_0}\}_{\tau_0}$    as in the assumptions of part (\ref{thm3}.ii).
Fix   $x_0 \in X$, for the moment; and consider $\tau_0:= \tau(x_0)$.  
Recall  that  we have $\tau ^{-1} (\tau_0) = \phi_{\tau_0}(Y)$ by assumption.
We also consider   the function  $\delta_{\tau_0}$, as $ \delta _{\tau_0} (x) := \rho (x_0, \phi_{\tau_0}(\pi (x)))$, which is $k$-Lipschitz on the set $\{|\tau-\tau_0| \leq k L \delta _{\tau_0} \}$ and satisfies $\delta_{\tau_0}(x_0)=0$.
Then, for each such a $x_0$, and $\tau_0$,  we consider the function $f_{x_0} :X \to \R$ defined as
\begin{equation}\label{int}
f_{x_0}(x)=\left\{ 
\begin{array}{lcl}
2(\tau (x)-\tau (x_0)) - \alpha \delta _{\tau _0} (x) &   & \mbox{ if } \, \, |\tau (x)-\tau (x_0)| \leq kL  \delta _{\tau _0}(x)  \\
\tau (x)-\tau (x_0) &    & \mbox{ if }\,\, \tau (x)-\tau (x_0) > kL  \delta _{\tau _0}(x)  \\
3( \tau (x)-\tau (x_0)) &    & \mbox{ if }\,\, \tau (x)-\tau (x_0) < -kL  \delta _{\tau _0}(x),  \\
\end{array}
\right.
\end{equation}
where $\alpha := kL.$  
We prove that the continuous $f_{x_0}$ satisfies the following properties:
\begin{description}
\item[$(i)$]  $f_{x_0}$ is $K$-Lipschitz;
\item[$(ii)$]  $f_{x_0}(x_0)=0;$
\item[$(iii)$]  $f_{x_0}$ is $3k$-biLipschitz on fibers, giving the same orientation that $\tau$ does. 
\end{description}
where $K= \max\{ 3k, 2k+\alpha k \}=2k+\alpha k$ because $\alpha >1.$ The property $(i)$ follows using that $\tau , \delta _{\tau _0}$ are both Lipschitz and $X$ is a geodesic space. On the other hand, $(ii)$ is true since  $\delta _{\tau _0}(x_0)=0 $
Finally, for every $y\in Y$ and $x,x' \in \pi ^{-1} (y) $ we have that $\rho (x_0, \phi_{\tau_0}(\pi (x)))=\rho (x_0, \phi_{\tau_0}(\pi (x')))$, i.e., $\delta_{\tau_0}$ is constant on fibers. Thus, the function $f_{x_0}$ is biLipschitz on fibers because $\tau$ is so too, and actually, the biLipschitz constant is 3 times the constant for $\tau$ and $f_{x_0}$ grows on fibers in the same direction that $\tau$ does.  
Hence $(iii)$ holds.

Now that we have the family $\{f_{x_0}\}_{x_0}$, given $\phi:Y'\to X$ intrinsically $L$-Lipschitz section, we consider the map $f:X \to \R$ given by
\begin{equation*}
f(x) := \sup _{x_0 \in \phi (Y')} f_{x_0} (x), \quad \forall x \in X,
\end{equation*}
and we want to prove that it is the map we are looking for. The Lipschitz properties are valid since the function $\delta _{x_0} $ is constant on the fibers, and (iii) holds. Consequently, the only non trivial fact to show is \eqref{equationluogozeri_intro}. 
Fix $\bar x_0 \in \phi (Y')$.
By $(ii)$ we have that $f_{ \bar x_0}( \bar x_0)=0$ and so it is sufficient  to prove that $f_{x_0}(\bar x_0)\leq 0$ for $x_0\in \phi (Y').$ Let $x_0\in \phi (Y').$  Then  using in addition that $\tau$ is $k$-Lipschitz, and that $\phi$ is  intrinsically $L$-Lipschitz, we have
\begin{equation*}
|\tau (\bar x_0) -\tau( x_0)| \leq k d(\bar x_0 , x_0) \leq Lk d(x_0, \pi^{-1} (\pi(\bar x_0)) ) \leq  Lk d(x_0,  \phi_{\tau_0}(\pi (\bar x_0)) ) =\alpha \delta _{\tau _0} (\bar x _0),
\end{equation*}
and so
\begin{equation*}
f_{x_0}(\bar x_0) = 2(\tau (\bar x_0)-\tau (x_0) - \alpha \delta _{\tau _0} (\bar x_0)) \leq0,
\end{equation*}
i.e., \eqref{equationluogozeri_intro} holds.
   \end{proof}

\section{Applications to groups}\label{sec6}

In this section shall apply the theory developed in the previous sections to the case of groups. 
The general setting is a topological group $G$ together with a closed subgroup $H$ of $G$ in such a way that the quotient space  $G/H:=\{gH:g\in G\}$ naturally is a topological space for which the  map $\pi:g\mapsto gH$  is continuous, open, and surjective: it is a quotient map.

A section for the map $\pi: G\to G/H$ is just a map $\phi: G/H \to G$ such that $\phi(gH)\in gH$, since we point out the trivial identity $\pi^{-1}(gH)=gH$. 
To have the notion of intrinsically Lipschitz section we need the group $G$ to be equipped with a distance which we assume left-invariant. We refer to such a $G$ as a {\em metric group}.

The inequality in definition of intrinsically Lipschitz section (see Definition~\ref{def_ILS}) rephrases as 
\begin{equation}\label{def_ILS_GP}
d\left(\phi (g_1H), \phi (g_2H)\right) \leq L\, d\left(\phi (g_1H), g_2H \right), \quad \mbox{for all } g_1, g_2 \in G.
\end{equation}
The concept of sections and intrinsically Lipschitz sections is preserved by left translation:  namely, if $\Sigma\subset G$ is the graph (i.e., the image) of an intrinsically Lipschitz section and $\hat g\in G$, then $\hat g \Sigma$ is the graph   of some (possibly different)  intrinsically Lipschitz section (see Proposition~\ref{intr graph left translation}). As a consequence, as done by Franchi, Serapioni, and Serra Cassano, one could see the intrinsically Lipschitz condition as a condition near the identity element $1_G$ of $G$ when the graph is translated at $1_G$.
In fact, in the special case in which $\phi (H)=1_G$ equation \eqref{def_ILS_GP}, for $g_1=1_G$, becomes
\begin{equation}\label{def_ILS_GP0}
d(1_G, \phi (g H)) \leq L d(1_G, g H), \quad \mbox{for all }   g  \in G.
\end{equation}

  \begin{prop}[Left-invariance of sections]\label{intr graph left translation}
For each $\hat g  \in G$ and section $\phi: G/H \to G$, the
set $\hat g \phi(G/H)$ is the image of the section
  $\phi_{\hat g} : G/H \to G$ defined as
\begin{equation}\label{defintraslfunct}
\phi_{\hat g} (gH):= {\hat g} \phi ({\hat g}^{-1} gH), \quad  \forall  gH \in G/H.
\end{equation}
Moreover,  $\phi_{\hat g}$ is an intrinsically $L$-Lipschitz section, if so is  $\phi$.
 \end{prop}

  \begin{proof} It is clear that $\phi_{\hat g}$ is a section, since, being $\phi$ a section, we have  $\phi ({\hat g}^{-1} gH) \in   {\hat g}^{-1} gH$.
  It is also evident that the image of $\phi_{\hat g}$ is $\hat g \phi(G/H)$.
Hence, we are just left to prove that if $\phi$ satisfies \eqref{def_ILS_GP},
then so does $\phi_{\hat g}$.
  We use the left invariance of the distance and  the intrinsically Lipschitz property of $\phi$ to obtain 
\begin{eqnarray*}
d(\phi_{\hat g}  (g_1H), \phi_{\hat g}  (g_2H)) & = &d( {\hat g}  \phi ({\hat g}^{-1} g_1H), {\hat g}  \phi ({\hat g}^{-1} g_2H))\\
& = &d(    \phi ({\hat g}^{-1} g_1H),   \phi ({\hat g}^{-1} g_2H))\\
& \leq &Ld(  \phi ({\hat g}^{-1} g_1H),   {\hat g}^{-1}g_2H )  \\
& = &Ld(   {\hat g}\phi ({\hat g}^{-1} g_1H),  g_2H )  \\
&= & Ld(\phi_{\hat g}  (g_1H),  g_2H), 
\end{eqnarray*}
for every $g_1, g_2 \in G$, as desired.
  \end{proof}


\subsection{Splitting of groups and semidirect products}\label{Semidirect products}
Next we shall consider setting where the subgroup $H$ of the metric group $G$ splits, or even more particularly, it splits with respect to a normal subgroup.
In these situations we will have an identification of $G/H$ with a subgroup of $G$, which in our opinion it helps in representing points in the quotient space, but confuses the geometric interpretation of intrinsically Lipschitz sections.

In this section $G$ will be a metric group that admits a splitting $G=H_1\cdot H_2$, as explained in the introduction: $H_1$ and $ H_2$ are two closed subgroups of $G$ for which every element $g\in G$ can be written uniquely as $g= h_1h_2$ with $h_1\in H_1$ and $h_2\in  H_2$. We shall denote $h_1$ as $\pi_{H_1}(g)$ and have a map $\pi _{H_1}:G\to H_1$, and similarly with $\pi_{H_2}:G\to H_2$.

A special splitting is given by semidirect-product structures: one of the factor is normal.
Namely, a group $G$ is a semidirect product if it admits a splitting $G=N\cdot H$ with $N$ normal within $G$.
In other words, the group  $G$ is isomorphic to the structure of semidirect product
$ N \rtimes H$ of     two    groups $N $ and $ H$ where $H$ acts on $N$ by automorphisms. 
When $H$ is seen as subgroup, it  acts on $N$ by conjugation\footnote{We shall repeatedly use the following identity: for any $m,n \in N$ and $\ell \in H$
\begin{equation}\label{formula_projection_conjugation_ginG}
\pi_N (m\ell n) = mC_\ell (n).\end{equation}}: $C_h(n) := hnh^{-1} \in N,$ for all $h\in H$ and $n\in N$.

For the sake of shortness, we shall write that $G=H_1\cdot H_2$ is a {\em splitted metric group} if it is a metric group that admits the splitting $G=H_1\cdot H_2$. If moreover the splitting is a semidirect product we write that 
$G=H_1\cdot H_2$ is a {\em semidirect metric group}.

We stress that if one has a splitting $G=H_1\cdot H_2$ then $G$ also admits the splitting $G=H_2\cdot H_1$. However, the projection maps may be different. For this reason, in this paper we fix the convention that 

{\it we always only 
 consider sections of the quotient with respect to the group on the right}
$$h_1h_2\in H_1\cdot H_2 \stackrel{\pi}{\mapsto} h_1H_2 \in G/H_2. $$
Of course, in the case of a splitting,  we have an identification of $G/H_2$ with $H_1$ element wise. However, as we will see soon, this identification has very little algebraic or geometric significance.

\subsection{Lipschitz property at the identity element} 
\label{sec:Lip_at_1}
We shall consider   the setting of  splitted groups $H_1\cdot H_2$ and consider the various notions of intrinsic Lipschitz graphs.
The key property that will makes us develop a theory  in a way that links the various notions studied in the literature with the very general one that we propose is a type of Lipschitz property for the projection map
$\pi_{H_1}: h_1h_2\in H_1\cdot H_2  {\mapsto} h_1$.
The condition is like the Lipschitz property but fixes one of the two considered points to be the identity element $1$ of the group.  
 We recall that, as defined in \eqref{def:splitat1}, we say that 
  $\pi_{H_1} $ is {\em $K$-Lipschitz at 1} if
  $d(1, \pi_{H_1}(g) )\leq K d(1,g)$, for all $g\in G$. Equivalently, this condition requests that
\begin{equation}\label{def:Lip1}
d(1, h_1 )\leq K d(1,h_1 H_2) , \qquad \forall h_1\in H_1.\end{equation}

The Lipschitz property at the identity element may not hold even in Carnot groups with a semidirect product (see next example), unless the subgroups are homogeneous, see   Proposition~\ref{ideaCarnot}.

 \begin{rem}[Non-example] There are splittings $N \rtimes H$ of subRiemannian Carnot groups for which the projection on $H$ is not Lipschitz, not the projection on $N$ is Lipschitz at 1, not even locally. Here is an example:  Let $ \HH ^1$ be the Heisenberg group seen as $\R^3$ with coordinates $x_1, x_2, x_3$; and let $\{X_1:=\partial _{x_1} - \frac {x_2} 2 \partial _{x_3}, X_2:=\partial _{x_2} +\frac {x_1} 2 \partial_{x_3} , X_3:=\partial _{x_3}\}$ be a basis of its Lie algebra  so that the only non-vanishing relation is $[X_1,X_2]= X_3$. This identification of  $\HH^1$ with $\R^3$ is by means of exponential coordinates associated with $(X_1,X_2,X_3)$.
 The dilations on $ \HH ^1$ become $\delta_\lambda (x_1, x_2, x_3) =  (\lambda x_1, \lambda x_2, \lambda^2 x_3)$, the identity element is ${\bf 0} = (0,0,0)$, and the product law is
 $$ (x_1, x_2, x_3)\cdot (x'_1, x'_2, x'_3) = (x_1+x'_1, x_2+x'_2, x_3+x'_3 +\tfrac12(x_1x'_2 - x_2x'_1)).$$
We  consider the following splitting of $\HH^1$: $N:=\{ (0,x_2,x_3 )\,:\, x_2,x_3 \in \R \}$ and $H:=\{ (x_1,0,x_1 )\,:\, x_1 \in \R \}$, so that $N \rtimes H= \HH ^1$.  
We notice that $N$ is a normal subgroup and $H$ is a {\it non}-homogeneous subgroup of $\HH^1$.  Let $d$ the left invariant metric on $\HH^1$ defined as $d((x_1,x_2,x_3), {\bf 0}) := \max \{|x_1|,|x_2|, \sqrt{|x_3|} \}$, see \cite[pp. 352-353]{LeDonne_Li_Rajala} for the proof that this function gives a distance.
If $g=(x_1,0,0) \in  B ({\bf 0},r)$ for some $r>0,$ then we have that $g=(0,0,-x_1)\cdot (x_1,0,x_1)$ with $(0,0,-x_1)\in N$ and $(x_1,0,x_1) \in H$. Moreover, we have
\begin{equation*}
\begin{aligned}
d({\bf 0},\pi_N (g))& = \sqrt{|x_1|},\\
d({\bf 0},g) &= |x_1|,\\
d({\bf 0},\pi_H (g)) &= \max\{|x_1|,\sqrt{|x_1|}\}.
\end{aligned}
\end{equation*}
Consequently there     is no $L>0$ and  $r>0$ such that for all $g \in B (1,r)$ we would have
 \begin{equation*}
d(1,\pi_N (g)) \leq Ld(1,g), \qquad
\text{nor} \qquad d(1,\pi_H (g)) \leq Ld(1,g).
\end{equation*}
 
\end{rem}

Next we show that if one has the Lipschitz property at the identity element then the standard sections are intrinsically Lipschitz. In case of a splitting $G=H_1\cdot H_2$, the inclusion $i: H_1 \hookrightarrow G$ can be seen as a section of $\pi: G \to G/H_2$ identifying $G/H_2$ with $H_1$. Also, after Proposition~\ref{intr graph left translation}, it is useful to recall that 
$H_1$ is the graph of an intrinsically $k$-Lipschitz section if and only if for all (or, equivalently, for some) $g\in G$ the set $gH_1$ is the graph of an intrinsically $k$-Lipschitz section.

\begin{prop}\label{prop7nov1021}
Let $G=H_1\cdot H_2$ be a splitted metric group   and $K\geq1$. Then the following are equivalent:
\begin{enumerate}
\item the inclusion map $i: H_1 \hookrightarrow G$ is an intrinsically $K$-Lipschitz section of $\pi_{H_1};$
\item $\pi_{H_1} $ is $K$-Lipschitz at $1$; 
\item  one has 
 \begin{equation*}\label{equ.020}
 d(1, \pi_{H_1} (g)) \leq  K d (1, gH_2), \quad \forall g \in G.
\end{equation*} 
\end{enumerate}
\end{prop}

\begin{proof} 

Condition (1), see \eqref{def_ILS_GP},  is equivalent to
\begin{equation}\label{def_ILS_GP'}
d(h_1, h'_1) \leq  K d(h_1,h'_1H_2), \quad \forall h_1,h'_1 \in H_1,
\end{equation}
which by left-invariance is equivalent to \eqref{def:Lip1},
which   is equivalent to Condition (2). 

In addition, since $\pi_{H_1} (gH_2) = \pi_{H_1} (g) $, Condition (3) and \eqref{def:Lip1} are also equivalent.
\end{proof} 

In the case $H_1$ is normal, which means we have a semidirect product $G=N \rtimes H$,
then the map $\pi_{H_1} =\pi_N$ is  Lipschitz at $1$ exactly when the other projection $\pi_{H_2} =\pi_H$ is Lipschitz. We stress that this latter map is a group homomorphism since $N$ is normal. In particular, the map $\pi_H$ is Lipschitz if and only if it is Lipschitz at $1$. These equivalences, with few others, are the subject of next proposition.

 \begin{prop}\label{Defi splitting is locally Lipschitz} 
Let $ G=N \rtimes H $ be a semidirect  metric  group. The following  conditions are equivalent:
 \begin{enumerate}
 \item there is $C_1>0$  such that  $\pi _H : N \rtimes H \to H$ is a  $C_1$-Lipschitz map, 
i.e., 
  \begin{equation*}\label{equ1}
d(\pi_H (g),\pi_H (p)) \leq C_1d(g,p), \quad \forall g,p \in G;
\end{equation*} 
 \item there is $C_2>0$  such that 
 \begin{equation*}\label{equ4}
d(1,\pi_H (g)) + d(1,\pi_N (g)) \leq  C_2d(1,g), \quad \forall g \in G;
\end{equation*}
\item there is $C_3>0$   such that $\pi_N$ is $C_3$-Lipschitz at $1$, i.e.,
 \begin{equation*}\label{equ2}
d(1,\pi_N (g)) \leq C_3d(1,g), \quad \forall g \in G;
\end{equation*}
\item there is $C_4>0$ such that 
 \begin{equation*}\label{equ3}
d(1,\pi_H (g)) \leq C_4d(1,g), \quad \forall g \in G;
\end{equation*} 
\item  there is $C_5>0$ such that 
 \begin{equation*}\label{equ.020'}
 d(1, \pi_N (g)) \leq  C_5 d (g^{-1}, H), \quad \forall g \in G;
\end{equation*} 
\item there is $C_6>0$ such that 
 \begin{equation*}\label{equ.020''}
 d(1, \pi_H (g)) \leq  C_6 d (g, N), \quad \forall g \in G;
\end{equation*}
  \item  there is $ C_7>0$ such that   \begin{equation*}
d(1, C_{\pi_H (g)^{-1}} (\pi_N (g) )) \leq   C_7 \mbox{d} (1,g), \quad \forall g \in G; \\ 
\end{equation*}
  \item  there is $ C_8>0$ such that   \begin{equation*}
d(1, C_{\pi_H (g)^{-1}} (\pi_N (g) )) \leq   C_8 d (g,H), \quad \forall g \in G. \\ 
\end{equation*} 
\end{enumerate} 
\end{prop}

\begin{proof} 
The equivalences $(2) \Leftrightarrow (3) \Leftrightarrow (4)$ easily follow  from the bounds:
 \begin{itemize}
\item $d(1,\pi_N (g)) \leq d(1, g) + d( g, \pi_N (g)) = d( 1, g) + d(1,\pi_H (g)),$
\item $d(1,\pi_H (g)) \leq d(1, (\pi_N (g))^{-1}) + d( (\pi_N (g))^{-1} , \pi_H (g)) =  d(1,\pi_N (g))+ d( 1, g) .$
\end{itemize}

[$(1) \Leftrightarrow (4)$] The implication $(1) \Rightarrow (4)$ follows by taking $p=1$.
The implication $(4) \Rightarrow (1)$ follows because 
$\pi_H$ is a homomorphism: 
\begin{equation*}
\begin{aligned}
d(\pi _H( g), \pi _H( p) )&= d (1, \pi _H( g)^{-1} \pi _H( p)) = d (1, \pi _H(g^{-1} p) ) \\
& \leq  C_4 d (1, g^{-1} p)\\
& =  C_4 d ( g, p ).
  \end{aligned}
\end{equation*}

[$(3) \Leftrightarrow (5)$] This follows from Proposition~\ref{prop7nov1021} $(2) \Leftrightarrow (3).$
 
 [$(4) \Leftrightarrow (6)$] The implication $(6) \Rightarrow (4)$ follows immediately   taking $1\in N$. 
The implication $(4) \Rightarrow (6)$ follows observing that $\pi_H(Ng)=\pi_H(g)$.


For the equivalence of   $(7)$, we show that $(2) \Rightarrow (7)$ and $(7) \Rightarrow (4).$  Notice that for any $nh \in N \rtimes H$ 
 \begin{equation*}\label{equ6}
  \begin{aligned}
 d(1, C_{h^{-1}} (n )) \leq 2 d(1,h) + d(1,n)\leq 2C_2 d(1,nh),
  \end{aligned}
\end{equation*}
we obtain the implication $(2) \Rightarrow (7)$. Moreover, the implication $(7) \Rightarrow (4)$ holds because
\begin{equation*}
 \begin{aligned}
d(1,h) & \leq d(1, nhh^{-1} n^{-1} h) \leq d(1,nh) + d(1, C_{h^{-1}} (n )) \leq (1+C_7) d(1,nh),
\end{aligned}
\end{equation*}
where in the second inequality we used the fact that $d(1,C_{h^{-1}} (n^{-1} ))=d(1, C_{h^{-1}} (n )).$ 

Finally, in order to prove the equivalence of   (8), we show that $(8) \Rightarrow (7)$ and $(3) \Rightarrow (8).$ The implication $(8) \Rightarrow (7)$ follows immediately from $d (g, H) \leq d(g,1).$ The implication $(3) \Rightarrow (8) $ follows by taking   $n \in N, h ,\ell  H$, bounding
 \begin{equation*}
 \begin{aligned}
   d(1, C_{h^{-1} } (n)) =  d(1, C_{h^{-1} } (n^{-1}))
    & =  d(1, \pi_N( C_{h ^{-1} }  (n^{-1})  h^{-1} \ell  ))\\
& \stackrel{(3)}{\leq}  C_3  d(1, C_{h^{-1} } (n^{-1})h^{-1} \ell   ) \\ 
& =   C_3 d(1,  {h^{-1} } n^{-1} \ell)=   C_3 d(nh,    \ell),
 \end{aligned}
\end{equation*}
and taking the infimum over $\ell  H$.


Hence, every two points of the proposition are equivalent and the proof is achieved.
\end{proof}

\begin{rem}
Notice that   many implications in the above proposition are valid also when the splitting   is not a semidirect product, e.g., $(1) \Rightarrow (4), (5) \Rightarrow (3), (6) \Rightarrow (4), (8) \Rightarrow (7)$. \end{rem}

   \begin{rem}\label{Defi_usandog} 
   Using the fact that $N$ is normal, we can rewrite, in the equivalent way, the inequalities of  Proposition~\ref{Defi splitting is locally Lipschitz}:
   \begin{itemize}
\item  Observe that $d(g, \pi_N(g))=d(1, \pi_H(g))$, we can change the left term in the inequalities (4) and (6) in Proposition~\ref{Defi splitting is locally Lipschitz} with  $d(g, \pi_N(g)).$
\item Notice that $d(g, \pi_H(g))= d(1,g^{-1} \pi_H(g)) =d(1, C_{\pi_H (g)^{-1}} (\pi_N (g) )),$ we can change the left term in the inequalities (7) and (8) in Proposition~\ref{Defi splitting is locally Lipschitz}  with $d(g, \pi_H(g)).$ 
\item Observe that $C_{ g^{-1}} (\pi_N (g) ) = C_{\pi_H (g)^{-1}} (\pi_N (g) )$, we can change the left term in the inequalities (7) and (8) in Proposition~\ref{Defi splitting is locally Lipschitz}  with $d(1,C_{ g^{-1}} (\pi_N (g) )).$
\end{itemize}

\end{rem}

 \begin{rem}\label{remservepericoniX} In every metric  group
  $ (N \rtimes H,d)$ one has the following inequalities:  
  
 \begin{equation*}
 \begin{aligned}
& d (g, N)  \leq d(1, \pi_H (g)), \\
 & d (g, H)  \leq d(1, C_{\pi_H (g)^{-1}} (\pi_N (g) )),  \\ 
 & d(1, g)  \leq d(1, C_{\pi_H (g)^{-1}} (\pi_N (g)  ))+ d(1, \pi_H (g)),\qquad \forall  g \in N \rtimes H . \\
 \end{aligned}
\end{equation*}
 Indeed,   considering $g \in N \rtimes H$ so that 
  $g=\pi_N (g) \cdot \pi_H (g)$,  we have
 \begin{equation*}
  \begin{aligned}
 d (g, N) & \leq d(1, (\pi_H (g))^{-1}\cdot (\pi_N (g))^{-1}\cdot \pi_N (g))= d(1, \pi_H (g)),\\
 d (g, H) & \leq d(1, (\pi_H (g))^{-1}\cdot (\pi_N (g))^{-1}\cdot \pi_H (g)) = d(1, C_{\pi_H (g)^{-1}} (\pi_N (g) )).
  \end{aligned}
\end{equation*}
    Moreover, to prove the last inequality it is enough to notice that
 \begin{equation*}
  \begin{aligned}
g= \pi_N (g) \cdot \pi_H (g)=\pi_H (g) \cdot  [\pi_H (g)]^{-1} \cdot \pi_N (g) \cdot \pi_H (g) = \pi_H (g) \cdot  C_{\pi_H (g)^{-1}} (\pi_N (g) ).
  \end{aligned}
\end{equation*}  
\end{rem}

%

\subsection{Lipschitz projections for CC distances}
In order to understand why in Carnot groups equipped with a homogeneous splitting the various notions of intrinsically Lipschitz sections coincide, we shall get a criterion to determine when the projection on a factor of a splitting of a group is Lipschitz. In this subsection, we shall focus on Carnot-Carath\'eodory distances on groups, see an introduction in \cite{ledonne_primer} for the notion of CC-metric induced by a distribution $\Delta$.

 \begin{prop}\label{propLip.23} Let $G= N \rtimes H$ be the semidirect product of two  Lie groups.
 Let $\Delta \subset \mathfrak g =  \mathfrak  n \rtimes \mathfrak  h$ be a bracket generating distribution on $G$.
 Then the following statements are equivalent:
  \begin{enumerate}
  \item $\pi_{\mathfrak  h} (\Delta ) \subset \Delta $.
 \item $\pi _H$ is Lipschitz 
 for every CC-metric induced by $\Delta$.
\end{enumerate} 
\end{prop}

In the proposition, we denoted by $\pi_{\mathfrak  h} $ the projection from the Lie algebra $\mathfrak  g$ of $G$ to 
the Lie algebra $\mathfrak  h$ of $H$ to the 
modulo
the Lie algebra   $\mathfrak  n$ of $N$.

Before the proof of Proposition \ref{propLip.23}, we discuss a lemma.
 \begin{lem}\label{propLip.0} Let $ N \rtimes H$ be the semidirect product of two  Lie groups.
Let $k\in \N$ and  $\Delta \subseteq   \mathfrak  n \rtimes \mathfrak  h$ be a 
  k-dimensional linear subspace  of the Lie algebra.
  If $m:=\dim(\pi_{\mathfrak  h} (\Delta ))$ then 
    there are $X^{\mathfrak  h}_1,\ldots , X^{\mathfrak  h}_m \in \mathfrak h $ and $ X^{\mathfrak n}_1, \ldots , X^{\mathfrak n}_k \in \mathfrak n$  such that
   \begin{equation}\label{baseperDelta.20.0}
 X^{\mathfrak  h}_1+X^{\mathfrak n}_1 ,\ldots , X^{\mathfrak  h}_m +X^{\mathfrak n}_m  ,  X^{\mathfrak n}_{m+1}, \ldots , X^{\mathfrak n}_k 
 \quad \mbox{ is a basis for } \Delta.
\end{equation}
Moreover, if   
    \begin{equation}\label{ipotesi_Delta}\pi_{\mathfrak  h} (\Delta ) \subset \Delta ,\end{equation} 
   we may choose 
   $ X^{\mathfrak n}_1= \ldots = X^{\mathfrak n}_m=0,$ so that 
      \begin{equation}\label{baseperDelta.20.1}
 X^{\mathfrak  h}_1  ,\ldots , X^{\mathfrak  h}_m    ,  X^{\mathfrak n}_{m+1}, \ldots , X^{\mathfrak n}_k 
 \quad \mbox{ is a basis for } \Delta.
\end{equation}
   \end{lem}
%

 \begin{proof} 
 Recall that $\pi:=\pi_{\mathfrak  h} :   \mathfrak  n \rtimes \mathfrak  h \to    \mathfrak  h $ is the projection onto $  \mathfrak  h$ modulo $ \mathfrak  n $. We shall consider the restriction of it to $\Delta$, that is $\pi|_\Delta: \Delta \to \pi(\Delta).$ 
 Recall that       \begin{equation}\label{algebra_lineare}k=\dim(\Delta)=\dim ( \pi(\Delta)) + \dim(\ker( \pi|_\Delta))= m + \dim(\ker( \pi|_\Delta)).
 \end{equation}
  Thus
 $\dim(\ker( \pi_\Delta))= k-m$. Hence, let $ X^{\mathfrak n}_{m+1}, \ldots , X^{\mathfrak n}_k \in \mathfrak n$  be a basis of $\ker( \pi_\Delta)$.
Also, let 
 $X^{\mathfrak  h}_1,\ldots , X^{\mathfrak  h}_m $ be a basis of $\pi(\Delta)$.
   In particular, notice that 
   $$ X^{\mathfrak n}_{m+1}, \ldots , X^{\mathfrak n}_k \in \ker( \pi|_\Delta)\subseteq \mathfrak n\cap \Delta$$ 
   and
    $$X^{\mathfrak  h}_1,\ldots , X^{\mathfrak  h}_m \in \pi(\Delta)\subseteq \mathfrak  h.$$
    For each $i=1,\ldots, m$, since $X^{\mathfrak  h}_i \in \pi(\Delta)$ and since $\ker( \pi|_\Delta)\subseteq \mathfrak n$ there exists  $X^{\mathfrak  n}_i \in {\mathfrak n}$ such that 
     $X^{\mathfrak  h}_i +X^{\mathfrak  n}_i \in\Delta$. 
  Therefore, from \eqref{algebra_lineare} we have that \eqref{baseperDelta.20.0} holds true.
   
   If, in addition, we have 
   \eqref{ipotesi_Delta} then we can choose $X^{\mathfrak  n}_i=0$, for all $i=1,\ldots, m$. And we conclude \eqref{baseperDelta.20.1}.
   %
%
%
%
%
  \end{proof}

 \begin{proof}[Proof of Proposition~\ref{propLip.23}] 
 [$(1) \Rightarrow (2)$] Let $\Delta':= \Delta\cap \mathfrak h$. Fix a left-invariant scalar product on $\mathfrak g$. Let $d'$ be the CC-distance on $H$ determined by  $\Delta'$. Notice that since for the definition of $d'$ one only considers $\Delta$-horizontal curves within $H$, we have that \begin{equation} \label{relazione_dist}d' \geq d_H,\end{equation} where $d_H$ denotes the CC  distance $d$ determined by  $\Delta$ on $G$ restricted to $H$.

We notice that $\pi_{\mathfrak  h} (\Delta ) \subset \Delta' $ if and only if the (smooth
homomorphic) map $\pi_H: (G,d) \to (H, d')$ is Lipschitz on compact sets. Since the map is a group morphism and the distance is geodesic, then there is no difference between Lipschitz and locally Lipschitz.
Hence by \eqref{relazione_dist}, these last conditions   imply that 
$\pi_H: (G,d) \to (H, d_H)$ is Lipschitz. 
 
%

[$(2) \Rightarrow (1)$] 
 By contradiction, we assume that $\pi_{\mathfrak  h} (\Delta ) \nsubseteq \Delta ,$ i.e., there is $w \in \Delta$ such that  
 $\pi_{\mathfrak  h} (w)= w_1 \in \pi_{\mathfrak  h} (\Delta )  \setminus \Delta $. Hence, by $\pi_{\mathfrak  h} (\Delta ) \subset \mathfrak  h$, we have that $w_1 \in \mathfrak h \setminus \Delta$. 
 
Now if $w= w_1+w_2$ with $w_2 \in \mathfrak n$, then for some $t>0$ we have that $tw= tw_1+tw_2 \in B(1,r)$ for some $r>0$ and so, using the facts $tw\in \Delta$ and $tw_1 \notin \Delta$. 
 \begin{equation*}
d(1,tw) \sim t, \quad \mbox{and} \quad d(1, t w_1) \gg t.
\end{equation*}
 Now, since $\pi_H: (G,d) \to (H, d_H)$ is assumed $L$-Lipschitz, it follows that $$t \ll d(1, t w_1)  \leq Ld(1,tw) \sim t$$ and so the contradiction.
  \end{proof}

\subsection{Sections in semidirect products and FSSC conditions}

Next we make some links between our notion of intrinsically Lipschitz section and  the various notions of intrinsically Lipschitz maps in the sense of Franchi, Serapioni, and Serra Cassano. The setting we are considering is the case of a splitting $G=H_1\cdot H_2$.
There will be a double view point in the objects of study: On the one hand, we might consider sections $\phi:G/H_2\to G$. On the other hand, we might consider maps $\psi:H_1\to H_2$.
There is an obvious link between the two objects:
A map $\psi$ induces a section $\phi$ as
  \begin{equation*}
\phi(gH_2):=\pi_{H_1}(g)   \psi (\pi_{H_1}(g)), \quad \forall g \in G.
\end{equation*}
A   section $\phi$   induces a map $\psi$ as
$$\psi (n):=n^{-1}\phi (nH_2), \quad \forall n\in {H_1}.$$
For a map $\psi:H_1\to H_2$ we set
  $\Gamma _ {\psi} := \{n\psi(n)\,:\, n \in H_1\},$ which is exactly the image of the associated section. 
  
  We say that $\psi$ is an 
{\em  intrinsically Lipschitz map in the FSSC sense}  if  exists $K>0$ such that
\begin{equation}\label{equation2312}
d(1, \pi_{H_2}(x^{-1}x') )\leq K d(1, \pi_{H_1}(x^{-1}x') ), \qquad \forall x,x'\in \Gamma _ {\psi} .
\end{equation}
In Lemma~\ref{propNEWequivNATALE}, we shall soon see that  this condition is equivalent to require that 
   \begin{equation}\label{6nov848.0..8}
d(1, x^{-1}x' )\leq \tilde K d(1, \pi_{H_1}(x^{-1}x') ), \qquad \forall x,x'\in \Gamma _ {\psi} .
\end{equation} 

The last property that we consider for a section is the following:
Given a splitting $G=H_1\cdot H_2$, we say that a section
for $\phi:G/H_2\to G$ is {\em intrinsically Lipschitz   with respect to the standard sections} if it is intrinsically Lipschitz  with respect to every  section of the form $ gH_1$ for all $g\in\phi(G/H_2)$, see 
Definition~\ref{defwrtpsinew}. Explicitly, a set $\Sigma\subset G$ is the graph of an intrinsically Lipschitz map with respect to the standard sections if and only if
\begin{equation}\label{defintrlipnuova2}
 d(g, gH_2\cap \hat g H_1) \leq L d(\hat g, gH_2\cap \hat g H_1), \quad \forall g,\hat g \in \Sigma.
\end{equation}
When $gH_2\cap \hat g H_1$ is a singleton for all $g,\hat g \in \Sigma$ (this happen for instance when $H_1$ is normal), this condition is more general than those mentioned above; indeed, for any $x,x' \in  \Sigma = \Gamma _ {\psi}$ we have that
 \begin{equation}\label{eqServePOI.2412}
\begin{aligned}
d(x,x') & \leq d(x, xH_2 \cap x' H_1)+ d( xH_2 \cap x' H_1, x')\leq (L+1)  d( x', xH_2 \cap x' H_1)\\
& \leq (L+1) d(x',  x' H_1) \leq  (L+1)  d(1, \pi_{H_1}(x^{-1}x') ).\\
\end{aligned}
\end{equation}
Yet,  when $\pi_{H_1} $ is $k$-Lipschitz at $1$ and $H_1$ is normal,  the condition \eqref{defintrlipnuova2} is equivalent to \eqref{equation2312} and  \eqref{6nov848.0..8} (see Proposition~\ref{propNEWequivNATALE.2}).

\subsection{The trivial case when the quotient map is a Lipschitz quotient}

We shall spend some words remarking that in the case we are in a group on which we are taking the quotient modulo a normal subgroup, then the quotient map is a Lipschitz quotient with respect to a distance on the quotient space. Hence, by Proposition~\ref{tivial_Lip_quo} the theory of intrinsically Lipschitz sections coincides with the one of biLipschitz embeddings. If moreover, the group has a splitting, then our intrinsically Lipschitz sections coincide with the Lipschitz maps between the factors.


 \begin{prop}\label{propEQUIVprop}
  Let $G = N \rtimes H$ be a semidirect metric group. Assume $N$ is boundedly compact so to have a  quotient  metric $d_{G/N}$ on $G/N$ (see \cite[Corollary 2.11]{MR3460162}). Via the projection on $H$ given by the semidirect product we see  $d_{G/N}$ as a distance on $H$. Then, the following facts are equivalent:
 \begin{enumerate}
\item the projection $\pi _H :G \to H$ is $L$-Lipschitz map$;$
\item it holds
\begin{equation}\label{equationBIlip}
 d_{|_H} (h, \ell) \leq L d_{G/N} (h,\ell ),\quad \forall h, \ell \in H .
\end{equation}
\end{enumerate}
Moreover, if one of these conditions are true then $d_{|_H}$ and $d_{G/N}$  are biLipschitz equivalent:
\begin{equation*}\label{equationBIlip.0}
\frac 1 L d_{|_H} (h, \ell) \leq d_{G/N} (h,\ell )\leq  d_{|_H} (h,\ell),\quad \forall h, \ell \in H .
\end{equation*}
 \end{prop}

 \begin{proof} 
 
[$(1) \Rightarrow (2)$] Fix $h, \ell \in H.$ Recall that there are $p,q \in G$ such that $\pi_H (p)=h, \pi_H (q)=\ell$ and $d_{G/N} (h,\ell ) =d(p,q),$  we get that
 \begin{equation*}
\begin{aligned}
 d_{|_H} (h, \ell)  = d(\pi_H(p), \pi_H (q)) \leq L d(p,q) =L d_{G/N} (h,\ell ),
\end{aligned}
\end{equation*}
 where in the first inequality we used the Lipschitz property of $\pi_H.$ 

[$(2) \Rightarrow (1)$] We notice that for every $p,q \in G$ with $\pi_H(p)=h$ and $\pi_H (q)=\ell$
  \begin{equation*}
\begin{aligned}
 d(\pi_H (p), \pi_H(q)) & = d(h,\ell ) \leq  L  d_{G/N} (h,\ell ) = L d(\pi ^{-1}_H (h), \pi ^{-1}_H (\ell)) \leq  L d(p,q).
\end{aligned}
\end{equation*}
The last statement follows from the simple fact that $ d_{G/N} (h,\ell ) = d(\pi ^{-1}_H (h), \pi ^{-1}_H (\ell)) =d (Nh, N\ell ) \leq d(h, \ell).$  
 \end{proof} 

From 
Proposition~\ref{tivial_Lip_quo} we have the following consequence.

\begin{coroll}\label{propEQUIVprop.0}
  Let $G =   N\rtimes H$ semidirect metric group with $N$ boundedly compact. If the projection $\pi _H :G \to H$ is Lipschitz, then every 
 intrinsically Lipschitz section $\psi: N\to G$ for $\pi_N$
  is a Lipschitz embedding.
\end{coroll}

\subsection{Link between the various notions}\label{sec:various_notions}

Recall that in a group that admits a splitting $G=H_1\cdot H_2$, to every map $\psi:H_1\to H_2$ we associate its graph
 $\Gamma _ {\psi}:= \{n\psi(n)\,:\, n \in H_1\}\subset G$.

\begin{lem}\label{propNEWequivNATALE}
Let $G=H_1\cdot H_2$ be a splitted metric group. For every  $\psi:H_1\to H_2$, the following are equivalent:
 \begin{enumerate}
 \item  $\psi$  is an intrinsically $K$-Lipschitz map in the FSSC sense, as in \eqref{equation2312}$;$
\item it holds
 \begin{equation}\label{27gennaio2022}
  d(x, x' ) \leq  \tilde K d(1, \pi_{H_1}(x^{-1}x') ), \qquad \forall x,x'\in \Gamma _ {\psi} ;
\end{equation}
\end{enumerate}
\end{lem}

\begin{proof}

 [$(1) \Rightarrow (2)$] Using the triangle  inequality we have that for any $x,x'\in \Gamma _ {\psi} $
 \begin{equation*}
    d(x, x' ) =d(1, x^{-1}x' ) \leq  d(1, \pi_{H_1}(x^{-1}x') ) + d(1, \pi_{H_2}(x^{-1}x') ) \leq (K+1) d(1, \pi_{H_1}(x^{-1}x') ).
\end{equation*}
[$(2) \Rightarrow (1)$] Using the left invariant property of $d$ and the triangle  inequality we obtain that for any $x,x'\in \Gamma _ {\psi} $
 \begin{equation*}
d(1, \pi_{H_2}(x^{-1}x') )  \leq   d(1, x^{-1}x' ) + d(1, \pi_{H_1}(x^{-1}x') ) \leq (\tilde K+1) d(1, \pi_{H_1}(x^{-1}x') ).
\end{equation*}
\end{proof}

\begin{prop} \label{propNEWequivNATALE.2}
Let $G=N  \rtimes H$ be a semidirect metric group such that $\pi_{N} $ is $k$-Lipschitz at $1$. For every  $\psi:N\to H$, the following are equivalent:
 \begin{enumerate}
 \item  $\psi$  is an intrinsically $K$-Lipschitz map in the FSSC sense as in \eqref{equation2312}$;$
\item $\Gamma _ {\psi} \subset G$ is the graph of an intrinsically Lipschitz map with respect to the standard sections, i.e., \eqref{defintrlipnuova2} holds for every $g,\hat g \in \Gamma _ {\psi}.$
\end{enumerate}
\end{prop}

\begin{proof} 

[$(2) \Rightarrow (1)$] This follows from \eqref{eqServePOI.2412} noticing that the set $xH \cap x' N$ is a singleton for every $x=n\psi(n),x'=m\psi(m) \in \Gamma _ {\psi},$ with $n,m\in N$ Indeed, using the fact that $N$ is normal, if $nh=m\psi(m) n' \in xH \cap x' N,$ for some $h\in H$ and $n\in N$, then
\begin{equation*}
nh= \underbrace{m C_{\psi(m) } ( n')  }_{ \in  N}  \underbrace{\psi (m) }_{ \in  H},
\end{equation*}
and so by uniqueness of the projection on $N$ and on $H$ we get that $h= \psi (m)$ and $n'= C_{\psi(m)^{-1}  } ( m^{-1}n). $

  [$(1) \Rightarrow (2)$] Using Lemma~\ref{propNEWequivNATALE} and recall that the set $xH \cap x' N$ is a singleton, for any $x,x' \in \Gamma _ {\psi}$ we have that
 \begin{equation*}
\begin{aligned}
d(x, xH \cap x' N) & \leq  d(x, x') + d(x', xH \cap x' N)\\
& \leq (K+1)d(1, \pi _{N} ((x')^{-1}x)) + d(x', xH \cap x' N) \\
& \leq C(K+1) d(1, (x')^{-1}x H) + d(x', xH \cap x' N),\\ 
\end{aligned}
\end{equation*} where in the last inequality we used Proposition \eqref{prop7nov1021} (3). 
Now we consider $h_2 \in H$ such that $d(x',x H \cap x'N)  = d(x', xh_2)$ and, consequently,
 \begin{equation*}
\begin{aligned}
d(x, xH \cap x' N) & \leq C(K+1) d(x',x h_2) + d(x', xH \cap x' N) = (C(K+1)+1) d(x', xH \cap x' N). 
\end{aligned}
\end{equation*}

\end{proof}

In the context of metric groups, Proposition~\ref{linkintrinsicocneelip} is as follows. Regarding Carnot groups, the reader can see \cite{FS16, SC16} and their references. 

 \begin{coroll}\label{corol8.8} 
Let $G=H_1\cdot H_2$ be a splitted metric group such that $\pi_{H_1} $ is $k$-Lipschitz at $1$.   Let $\psi:H_1 \to H_2$, $h_1 \in H_1$ and $p =h_1 \psi(h_1).$ Then the following statements are equivalent:
 \begin{enumerate}
 \item $\psi$ is intrinsically $L$-Lipschitz in the FSSC sense  at  $h_1 \in H_1;$ 
\item  for all $\hat L\geq (L+1)k,$ it holds $$p\cdot X_{H_2} (1/\hat L)  \cap \Gamma_{\psi } =  \emptyset,$$
where $p\cdot X_{H_2}( \alpha )$ is the cone with  axis $H_2,$ vertex $p,$ opening  $\alpha $ defined as the translation of 
\begin{equation*} 
X_{H_2}(\alpha) :=\{ g \in G \, :\, d (g^{-1},H_2) < \alpha d(1,g) \}.
\end{equation*}
\end{enumerate} 
\end{coroll}

\begin{proof} 
It is enough to combine Lemma~\ref{propNEWequivNATALE} and Proposition~\ref{prop7nov1021}.
\end{proof}

We conclude this section proving Proposition~\ref{ideaCarnot}.

 \begin{proof}[Proof of Proposition \ref{ideaCarnot}]

   \noindent{\rm \bf(\ref{ideaCarnot}.i.a)} and   \noindent{\rm \bf(\ref{ideaCarnot}.i.b)}. The  statements can be either found in   \cite[Proposition 2.2.9]{FS16}, or, more generally, they follow from 
   Proposition~\ref{propLip.23}
    and Proposition~\ref{Defi splitting is locally Lipschitz}.
   

  \noindent{\rm \bf(\ref{ideaCarnot}.ii)}. Because $N$ is a subgroup and because of left-invariance of intrinsically Lipschitz sections (see Proposition~\ref{intr graph left translation}), it is enough to prove that $N\simeq G/H \hookrightarrow G$ is an intrinsically  Lipschitz section of $\pi_{N}$. 
  From Proposition~\ref{prop7nov1021}, we conclude if we have {\rm \bf(\ref{ideaCarnot}.i.b)} (or, equivalently from Proposition~\ref{Defi splitting is locally Lipschitz}
  if we have {\rm \bf(\ref{ideaCarnot}.i.a)}).


   \noindent{\rm \bf(\ref{ideaCarnot}.iii)} 
     We want to prove \eqref{def_ILS_GP} for $\phi$. 
%
Notice that from the definition  \eqref{FSSC_Noi}  of $\phi$ and the fact that $\psi$ is ranged into $H$, we have
   \begin{equation}\label{6nov8480} \phi (g_2 H) H =\pi_N(g_2)   \psi (\pi_N(g_2)) H= g_2H.
   \end{equation}
   Since $\psi:N\to H$ is intrinsically Lipschitz map in the FSSC sense, by Lemma~\ref{propNEWequivNATALE}, we   have \eqref{27gennaio2022}, once we observe  that $\phi$ is ranged into $\Gamma_\psi$. 
Hence, 
we use   Proposition~\ref{prop7nov1021}.(3) to  get the desired equation \eqref{def_ILS_GP}:  
   \begin{equation*}
\begin{aligned}
d(\phi(g_1 H), \phi (g_2 H)) &  \stackrel{\eqref{27gennaio2022}}{\leq}  \tilde K d(1, \pi_{N}(\phi(g_1 H)^{-1} \phi (g_2 H) ))\\
&\stackrel{\eqref{prop7nov1021}}{\leq}  \tilde K K d(1, \phi(g_1 H)^{-1} \phi (g_2 H)H)  \stackrel{\eqref{6nov8480}}{=}  \tilde K K d(\phi(g_1 H), g_2 H).
\end{aligned}
\end{equation*}


    \noindent{\rm \bf(\ref{ideaCarnot}.iv)} 
     Vice versa, we want to prove   \eqref{27gennaio2022}   for the map $\psi$ defined as 
   in \eqref{Noi_FSSC},  
     assuming \eqref{def_ILS_GP}. 
     First, for all $n,m\in N$ observe that, since $\psi$ is ranged into $H$ we have that
    \begin{equation}\label{proj_gen}
    \pi_{N}((n\psi (n))^{-1} m\psi (m))) =   \pi_{N}((n\psi (n))^{-1} m\psi (n)))=(n\psi (n))^{-1} m\psi (n),
    \end{equation}
     where in the  
      last equality we used  that $N$ is normal.  
Then, for all $n,m\in N$ we have 
   \begin{equation*}
\begin{aligned}
d(n\psi (n),m \psi (m)) & \stackrel{\eqref{Noi_FSSC}}{=}  d(\phi(nH), \phi (mH))  \stackrel{\eqref{def_ILS_GP}}{\leq} L d(\phi (nH), mH)\\
&   \stackrel{\eqref{Noi_FSSC}}{=} L d(1, (n\psi (n))^{-1} m H) \\
& \leq L d(1,  (n\psi (n))^{-1} m\psi (n)) \stackrel{\eqref{proj_gen}}{=} L d(1,  \pi_{N}((n\psi (n))^{-1} m\psi (m)))  .\\
\end{aligned}
\end{equation*}
 where in the  inequality we used that $\psi$ is ranged into $H$. 
  \end{proof}



\subsection{Ahlfors regularity in groups}
\label{sec_Ahlfors_groups}

A corollary of Theorem~\ref{thm2} is Theorem~\ref{theoremAhlfors} which roughly states that an intrinsically Lipschitz graph   on a normal Ahlfors-regular subgroup $N$  is  Ahlfors-regular. 


 \begin{rem}\label{remHaar}
Given a left-Haar measure $\mu_N$ on a closed normal subgroup $N \triangleleft G$, then the measure  
$\mu_N$ may not be preserved by conjugation by elements in $G$.
However, assuming in addition that $G$ is a  Lie group, we claim that for all compact sets $K\subseteq G$ there exists $C>0$ such that for all $g\in K$ we have that on $K$
the Jacobian of $C_g$  with respect to $\mu_N$ is bounded by $C$.
Indeed, this last statement follows from the fact  that on Lie groups every Haar measure is given by a smooth volume form and each map $C_g:N\to N$ is smooth.  
\end{rem}

 At this point we have an easy rephrasing of Theorem~\ref{thm2} in the case of groups. Still, we provide the short proof next.
 
 \begin{theorem}\label{theoremAhlfors} 

Let  $G=N \rtimes H $ be a semidirect  metric Lie group with 
boundedly compact distance. 
Assume that $\pi_H: G \to H$ is Lipschitz and that $N$ is locally $Q$-Ahlfors regular.
If $\phi : G/H\to G$ is an intrinsically $L$-Lipschitz section, then $\phi (G/H)$ is locally  $Q$-Ahlfors regular. 
\end{theorem}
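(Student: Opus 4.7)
The plan is to apply Theorem~\ref{thm2} with $Y := G/H$, base measure $\mu$ taken to be the push-forward of a left Haar measure $\mu_N$ on $N$ under the bijection $N \ni n \mapsto nH \in G/H$ (well-defined by the semidirect structure), and reference intrinsically Lipschitz section $\psi_0 : Y \to G$ induced by the inclusion $N \hookrightarrow G$, namely $\psi_0(nH) := n$. First I would check the prerequisites on $\psi_0$: the hypothesis that $\pi_H$ is Lipschitz, combined with Proposition~\ref{Defi splitting is locally Lipschitz}, gives that $\pi_N$ is Lipschitz at $1$; Proposition~\ref{prop7nov1021} then yields that $\psi_0$ is intrinsically Lipschitz. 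Moreover $\psi_0(Y) = N$ and $(\psi_0)_*\mu = \mu_N$ as a measure on $G$ supported on $N$, so by assumption $\psi_0(Y)$ is locally $Q$-Ahlfors regular with respect to $(\psi_0)_*\mu$.

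The core work is to verify the comparability condition \eqref{Ahlfors27ott.112}. Fix $x, x' \in G$ with $\pi(x) = \pi(x')$, so $x' = xh$ for some $h \in H$, and write $x = n_0 h_x$ with $n_0 = \pi_N(x)$ and $h_x = \pi_H(x)$. Using normality of $N$, a direct computation gives
\[
\pi_N(xg') \;=\; n_0 \cdot C_{h_x}(\pi_N(g')), \qquad g' \in G,
\]
where $C_g(n) := gng^{-1}$. Under the identification $G/H \simeq N$, the set $\pi(B(x,r))$ therefore corresponds to $n_0 \cdot C_{h_x}(A_r)$, and $\pi(B(x',r))$ to $n_0 \cdot C_{h_x h}(A_r)$, where $A_r := \pi_N(B(1,r)) \subset N$.

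Now I would use that, since $N$ is a normal Lie subgroup of the Lie group $G$, for each $g \in G$ the map $C_g|_N$ is a Lie-group automorphism of $N$, hence pushes Haar measure to a scalar multiple of itself: there exists $c(g) > 0$ with $\mu_N(C_g(E)) = c(g)\mu_N(E)$ for every Borel $E \subset N$. Combined with left-invariance of $\mu_N$, this gives
\[
\mu(\pi(B(x,r))) \;=\; c(h_x)\,\mu_N(A_r), \qquad \mu(\pi(B(x',r))) \;=\; c(h_x h)\,\mu_N(A_r),
\]
so the ratio equals $c(h_x)/c(h_x h)$, a positive constant independent of $r$, proving \eqref{Ahlfors27ott.112}. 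With all hypotheses of Theorem~\ref{thm2} in place, its conclusion applied to the arbitrary intrinsically Lipschitz section $\phi$ yields that $\phi(G/H)$ is locally $Q$-Ahlfors regular with respect to $\phi_*\mu$, as desired.

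The main obstacle is precisely this comparability step: although $\pi_N$ is not a homomorphism on $G$, one must recognise that after factoring out the base point, $\pi(B(x,r))$ and $\pi(B(x',r))$ differ only by the $N$-automorphism $C_{h_x^{-1} h_{x'}}$, whose effect on Haar measure is a scalar independent of the scale $r$; this is the Lie-theoretic input underlying Remark~\ref{remHaar}. The remaining verifications are bookkeeping using the two propositions from Section~\ref{sec6}.
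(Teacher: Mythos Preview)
Your proof is correct and follows the same overall strategy as the paper: both reduce to Theorem~\ref{thm2}, identify $G/H$ with $N$, take the measure $\mu$ to be (a push-forward of) left Haar measure on $N$, use the inclusion $N\hookrightarrow G$ as the reference intrinsically Lipschitz section, and verify the comparability condition \eqref{Ahlfors27ott.112} by computing that $\pi(B(x,r))$ and $\pi(B(x',r))$ differ by a left translation and a conjugation $C_h|_N$.

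The one genuine difference lies in how comparability is justified. The paper argues locally: on the compact set $\pi(\bar B(x,r_0))$ the smooth map $L_n C_{h'}C_{h^{-1}}L_{n^{-1}}$ has bounded Jacobian with respect to the smooth Haar density, which yields \eqref{Ahlfors27ott.112} on $(0,r_0)$; this is exactly the content of Remark~\ref{remHaar}. You instead observe that $C_h|_N$ is a Lie-group automorphism of $N$, so by uniqueness of Haar measure it scales $\mu_N$ by a constant $c(h)$, giving a ratio $c(h_x)/c(h_xh)$ independent of $r$. Your argument is cleaner and in fact slightly sharper (the ratio is exactly constant, not merely bounded), though it relies on $\mu_N$ being Haar; the paper's smoothness argument would survive for any smooth left-invariant density. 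You are also more explicit than the paper about checking, via Propositions~\ref{Defi splitting is locally Lipschitz} and~\ref{prop7nov1021}, that the reference section $\psi_0$ is intrinsically Lipschitz, which the paper leaves implicit.
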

Recall that requiring that $N$ is locally $Q$-Ahlfors regular 
means, first that the $Q$-Hausdorff measure $\mu_N$ of $N$ is locally finite and nonzero, hence, being left-invariant, it is a 
  left-Haar measure; second, we have that
  for each point $p\in N$
     there are 
     $c, r_0>0$ so that 
\begin{equation}\label{Ahlfors_IN_N.0}
c^{-1} r^{Q}\leq \mu_N \big( B_N(p,r)\big) \leq c  r^{Q},  \qquad \forall r\in (0,r_0).
\end{equation}

 \begin{proof}
 We plan to use Theorem~\ref{thm2}. Let $X=G$, $Y=G/H$, and $\pi:X\to Y$ the projection. We identify $G/H$ with $N$ and $\pi$ with $\pi_N.$   We shall show that  the   $Q$-Hausdorff    measure $\mu:=\mu_N$ on $N$ is such that  
 for every $r_0>0$ and every $x,x' \in G$ with $\pi (x)=\pi(x')$  there is $C>0$ such that 
   \begin{equation}\label{Ahlfors27ott.112finale}
\mu (\pi (B(x,r))) \leq C \mu (\pi (B(x',r))), \qquad \forall r\in (0,r_0).
\end{equation}

 
 Fix $r_0>0$ and $x,x' \in G$ such that $\pi (x)=\pi (x'),$ i.e., there is $n\in N$ and $h,h' \in H$ such that $x=nh, x'=nh'.$ Hence  \begin{equation*}
\begin{aligned}
\pi (B(x',r)) &  = \pi (\{ nh'g \,:\, g\in G, \, d(1,g ) \leq r \})\\
&  = \{ nC_{h'} (\pi(g)) \,:\,  g\in G, \, d(1,g ) \leq r \}\\
& = L_{n}C_{h'} (\pi(B(1,r))).
\end{aligned}
\end{equation*}
Moreover, using a similar argument, it easy to see that $\pi(B(1,r))= C_{h^{-1}} L_{n^{-1}} (\pi(B(x,r)))$ and, consequently,
 \begin{equation}\label{equation9.03}
\begin{aligned}
\pi (B(x',r)) & = L_{n}C_{h'}C_{h^{-1}} L_{n^{-1}} (\pi(B(x,r))), \qquad \forall r\in (0,r_0).
\end{aligned}
\end{equation} 

Since $\pi (\bar B(x,r_0))$ is contained in a compact set $K \subset N,$ we have that on the set $K,$ the map $N \ni m \mapsto  L_{n}C_{h'}C_{h^{-1}} L_{n^{-1}} (m)$ is smooth  and hence has bounded Jacobian with respect to  the (smooth) measure $\mu$, say by $C>0.$
 Hence, \eqref{Ahlfors27ott.112finale} holds and we apply Theorem~\ref{thm2}  in order to obtain the thesis.
%
%
%
\end{proof}
  
   As a consequence, as done by Franchi and Serapioni \cite{FS16}, one could see this result in the context of Carnot groups:
 \begin{coroll}[FSSC]
  Let $G=N  \rtimes H$ be a Carnot group that is the semidirect product of two   homogeneous subgroups, with $N$ normal.
 For every $\phi:N\to H$  intrinsically Lipschitz map in the FSSC sense, the set $\Gamma _ {\psi}$ is locally Ahlfors regular. 

 \end{coroll}

 \begin{proof} 
	In order to justify the application of Theorem \ref{theoremAhlfors}, we stress that  the distance $d$ on each Carnot group is boundedly compact and, since  $N$ is homogeneous, the distance $d$ restricted on $N$ is homogenous and hence $N$ is $Q$-Ahlfors regular.
	Because  on Carnot groups intrinsically Lipschitz maps in the FSSC sense are in correspondence (with same graphs) to intrinsically $L$-Lipschitz section (see Proposition~\ref{ideaCarnot}), Theorem \ref{theoremAhlfors} gives the corollary.
\end{proof}

\subsection{Level sets and extensions in groups}  In this section we present Theorem~\ref{thm3}  in Carnot groups which is already proved in \cite[Theorem 1.4]{Vittone20}.  We underline that  Vittone shows the result in $\R^s$ and not only in $\R$ and he uses the coercivity condition, which corresponds  to asking a  biLipschitz property of $f$ on the fibers.  However,  it is possible to obtain the  following result:
 \begin{theorem}[Vittone]\label{Level sets and extensions in groups} 
Let  $G=N \rtimes H$ be a Carnot  group that is the semidirect product of  a normal subgroup $N $ and a one-dimensional horizontal subgroup $H$, i.e.,  $H=\{ \exp (tX) \,: \, t\in \R \}$ for some $X$ in the first layer of $G$.  If $S \subset G$ is not empty, then the following statements are equivalent:
\begin{enumerate}
\item[(\ref{Level sets and extensions in groups}.1)] there exists a map $\psi : U\subseteq N \to H $ that is  intrinsically Lipschitz in the FSSC sense, here $U $ is a subset of $N$, with $S =\Gamma _\psi;$
\item[(\ref{Level sets and extensions in groups}.2)] there exists a Lipschitz map  $f:G \to \R$ that  is biLipschitz on fibers such that 
  $$ S \subset f^{-1} (0). $$ 
\end{enumerate}
\end{theorem}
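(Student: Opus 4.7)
The plan is to derive the equivalence as a direct consequence of Theorem~\ref{thm3} combined with the dictionary of Proposition~\ref{ideaCarnot}. Since $G$ is a Carnot group, $N$ is normal, and $H$ is a $1$-dimensional horizontal (hence homogeneous) subgroup, the hypotheses of Proposition~\ref{ideaCarnot}.i are in force: $\pi_H$ is a Lipschitz homomorphism and $\pi_N$ is Lipschitz at $1_G$. Throughout, I will identify $G/H$ with $N$ via the bijection $nH\leftrightarrow n$.

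For $(\ref{Level sets and extensions in groups}.2)\Rightarrow(\ref{Level sets and extensions in groups}.1)$, apply Theorem~\ref{thm3}.i to $f$ with $Z=\R$ and $z_0=0$: one obtains an intrinsically Lipschitz section $\phi_0:G/H\to G$ with $\phi_0(G/H)=f^{-1}(0)\supseteq S$. Proposition~\ref{ideaCarnot}.iv then yields a map $\psi_0:N\to H$ that is intrinsically Lipschitz in the FSSC sense, with $\Gamma_{\psi_0}=f^{-1}(0)$. Define $U:=\pi_N(S)\subseteq N$ and $\psi:=\psi_0|_U$. A brief check gives $\Gamma_\psi=S$: for any $s\in S\subseteq\phi_0(G/H)$ one has $s=\phi_0(sH)=\phi_0(\pi_N(s)H)=\pi_N(s)\psi(\pi_N(s))$, and conversely every element $n\psi(n)=\phi_0(nH)$ with $n\in U$ equals some $s\in S$ by construction of $U$. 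The FSSC inequality~\eqref{equation2312} is pointwise on $\Gamma_\psi$ and therefore descends from $\psi_0$ to $\psi$.

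For $(\ref{Level sets and extensions in groups}.1)\Rightarrow(\ref{Level sets and extensions in groups}.2)$, Proposition~\ref{ideaCarnot}.iii (applied pointwise on the subset $U$, since the intrinsic Lipschitz inequality is pointwise) converts $\psi$ into an intrinsically Lipschitz section $\phi:Y'\to\pi^{-1}(Y')$ of $\pi|_{\pi^{-1}(Y')}$, where $Y':=\{nH:n\in U\}$ and $\phi(Y')=\Gamma_\psi=S$. To produce $f$ one applies Theorem~\ref{thm3}.ii with the natural choice $\tau(g):=t$ where $\pi_H(g)=\exp(tX)$. This $\tau$ is Lipschitz by Proposition~\ref{ideaCarnot}.i.a and $1$-biLipschitz on the fibres of $\pi$, because these fibres are left translates of the $1$-dimensional subgroup $H$. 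For each $\tau_0\in\R$, the level set $\tau^{-1}(\tau_0)=N\exp(\tau_0X)$ is by Proposition~\ref{ideaCarnot}.ii the graph of the intrinsically Lipschitz section $\phi_{\tau_0}(nH):=n\exp(\tau_0X)$, verifying hypothesis~(1) of Theorem~\ref{thm3}.ii.

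The main obstacle is to verify hypothesis~(2): choosing $\rho$ biLipschitz equivalent to $d$ so that $\delta_{\tau_0}(x)=\rho(x_0,\pi_N(x)\exp(\tau_0X))$ is Lipschitz on the cone $\{|\tau-\tau_0|\leq kL\delta_{\tau_0}\}$. Writing $n_0=\pi_N(x_0)$ and using left-invariance,
$$d(x_0,\pi_N(x)\exp(\tau_0X))=d\bigl(1_G,\,C_{\exp(-\tau_0X)}(n_0^{-1}\pi_N(x))\bigr),$$
so the control of $\delta_{\tau_0}$ reduces to controlling the inner automorphism $C_{\exp(-\tau_0X)}$ of $N$. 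Since $X$ is horizontal and $\mathrm{ad}_X$ strictly raises the Carnot grading, $\mathrm{Ad}(\exp(-\tau_0X))=e^{-\tau_0\mathrm{ad}_X}$ is a finite polynomial in $\tau_0$ whose non-identity terms land in strictly deeper layers of $\mathfrak{n}$; its CC-Lipschitz constant on $N$ grows only polynomially in $|\tau_0|$. The cone constraint $|\tau-\tau_0|\leq kL\delta_{\tau_0}$ forces the $\tau$-excursion to be comparable with the $N$-displacement, and choosing $\rho$ adapted to the homogeneous structure of $N$ (for instance a box-type metric balancing the homogeneous gauge of $N$ against the $H$-coordinate $\tau$) absorbs these polynomial factors and gives a uniform Lipschitz estimate. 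With this verification, Theorem~\ref{thm3}.ii produces a Lipschitz $f:G\to\R$ that is biLipschitz on fibres and satisfies $S=\phi(Y')\subseteq f^{-1}(0)$, completing the proof.
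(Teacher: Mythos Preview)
Your argument for $(\ref{Level sets and extensions in groups}.2)\Rightarrow(\ref{Level sets and extensions in groups}.1)$ matches the paper's and is fine.

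For $(\ref{Level sets and extensions in groups}.1)\Rightarrow(\ref{Level sets and extensions in groups}.2)$ your setup (the choice of $\tau=\pi_H$, the identification of the level sets $\tau^{-1}(\tau_0)$ as intrinsically Lipschitz graphs) is correct, but the verification of hypothesis~(2) of Theorem~\ref{thm3}.ii has a genuine gap. You try to control $\delta_{\tau_0}$ by analysing the conjugation $C_{\exp(-\tau_0 X)}$ directly and then claim that a suitable box-type $\rho$ ``absorbs'' the resulting polynomial growth in $|\tau_0|$. But the constant $k$ in Theorem~\ref{thm3}.ii must be \emph{uniform} in $\tau_0$ and in $x_0$; a Lipschitz bound on $C_{\exp(-\tau_0 X)}$ that blows up polynomially with $|\tau_0|$ cannot be salvaged by a fixed choice of $\rho$, and you give no mechanism explaining how the ``absorption'' would work. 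The sentence about the cone constraint bounding the ``$\tau$-excursion'' concerns $\tau-\tau_0$, not $\tau_0$ itself, so it does not address the issue.

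The paper avoids this difficulty entirely. Because $\rho$ is chosen to be left-invariant, the substitution $x\mapsto x_0^{-1}x$ shows $\delta_{\tau_0}(x)=\rho(1_G,\pi_N(x_0^{-1}x))$ and simultaneously carries the cone $\{|\tau-\tau_0|\le kL\delta_{\tau_0}\}$ to $\{|\pi_H|\le kL\,\rho(1_G,\pi_N(\cdot))\}$. Thus one only needs to prove that the single function $\delta_0(x):=\rho(1_G,\pi_N(x))$ is Lipschitz on the single dilation-invariant cone $\{|\pi_H|\le kL\delta_0\}$. The paper then makes the specific choice of $\rho$ as a homogeneous distance whose gauge is \emph{smooth away from} $\exp(V_1)$; since the cone (minus the origin) misses $\exp(V_1)$, the function $\delta_0$ is smooth there, hence locally Lipschitz, and homogeneity upgrades this to global Lipschitz. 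This combination---left-invariance to kill the $\tau_0$-dependence, then smoothness of $\rho$ off $\exp(V_1)$ plus dilation invariance---is the missing idea in your argument.
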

 
  \begin{proof} 
  Recall from Theorem \ref{ideaCarnot} that there is a dual viewpoint between 
maps  $\psi : U\subseteq N \to H $ that are  intrinsically Lipschitz in the FSSC sense
  and maps
  $\phi :U\subseteq G/H \to G$ that are   intrinsically Lipschitz sections of the projection $\pi :G \to G/H.$
  We shall use this identification.
  
    The proof of the theorem will be just an application of our Theorem~\ref{thm3}. 
  We apply the theorem with the following notation: $X=G, Y= G/H\simeq N  $, $\pi :G \to G/H,$ $Z=\R$.
  
  [(\ref{Level sets and extensions in groups}.2)
  $\Rightarrow$
  (\ref{Level sets and extensions in groups}.1)] From Theorem~\ref{thm3}.i there is 
  an intrinsically  Lipschitz section  $\phi : G/H\to G $ (and equivalently a map $\psi:  N \to H $ that is  intrinsically Lipschitz in the FSSC sense)  such that
$
\Gamma _\psi  =\phi (G/H)=f^{-1} (0)$. Then, it is enough to take $U:=\{n\in N : n\psi(n) \in S\}$ and restrict the $\psi$ to $U$.

  [(\ref{Level sets and extensions in groups}.1)
  $\Rightarrow$
  (\ref{Level sets and extensions in groups}.2)]  
  Next we use Theorem~\ref{thm3}.ii. We have that $X=G$ is geodesic and that admits equivalent homogeneous distances $\rho$ with the property that the distance from the origin $1_G$ is smooth away from $\exp(V_1)$.  
  We also take  $\tau := \pi_H:G\to \R $, where we   identify $\R$ with $H$ via the map $t \mapsto \exp (tX).$ Since $ \tau$ can be seen as the projection modulo the normal subgroup $N$, then it is Lipschitz. 
  Moreover, the level sets $\tau ^{-1}(\tau_0)$ are left-translations of $N$, which are intrinsically $k$-Lipschitz graph of sections $\phi_{g_0}(gH):= g_0\pi_N(g)$, see Proposition \ref{prop7nov1021} together with \ref{ideaCarnot}.i.b.
  Next, we check the   assumption (2) of \ref{thm3}.ii. Because of left invariance, we can just consider the function
  $x \mapsto \delta _{0} (x) := \rho (1_G,  \pi_N (x))$ 
  on the set $\{|\pi_H (x)| \leq \delta _{0} (x)\}$.
  Notice that, denoting by $M_\eps$ the intrinsic multiplication in the Carnot group, we have 
  $ \pi_H (M_\eps(x)) = \eps \pi_H (x) $ and $
     \delta _{0} (M_\eps(x))= \eps \delta _{0} (x)$. Hence  the set $\{|\pi_H (x)| < \delta _{0} (x)\}$ is dilation invariant and its intersection avoids $\exp(V_1)$. Consequently, on it the function  $\delta _{0}$ is the composition of smooth functions, which are therefore Lipschitz on compact sets. Again, by homogeneity, the function is Lipschitz. (This last part of the argument is not very different from Vittone's original proof.)
     Applying Theorem~\ref{thm3}.ii concludes the existence of the requested function $f:G\to \R$.
	\end{proof}

 \bibliographystyle{alpha}
\bibliography{DDLD}

\end{document}